\documentclass{amsart}
\usepackage{amssymb, bm, amsmath, latexsym, mathabx, dsfont,tikz}
\usepackage{makecell, multirow}
\usepackage{enumitem}

\linespread{1.3}
\renewcommand{\baselinestretch}{\baselinestretch}
\renewcommand{\baselinestretch}{1.1}
\numberwithin{equation}{section}

\newtheorem{thm}{Theorem}[section]
\newtheorem{lem}[thm]{Lemma}
\newtheorem{cor}[thm]{Corollary}

\theoremstyle{definition}

\theoremstyle{remark}
\newtheorem{rmk}[thm]{Remark}

\numberwithin{equation}{section}

\newcommand{\pra}{{\, \xrightarrow{\nn} \,}}

\newcommand{\ra}{{\, \rightarrow \,}}

\newcommand{\gen}{\text{gen}}

\newcommand{\z}{{\mathbb Z}}
\newcommand{\nn}{{\mathbb{N}_0}}
\newcommand{\q}{{\mathbb Q}}

\newcommand{\n}{{\mathbb N}}

\newcommand{\aaa}{{\bm{a}}}

\newcommand{\Mod}[1]{\ (\mathrm{mod}\ #1)}

\newcommand{\df}[1]{\langle #1 \rangle}

\newenvironment{newenum}
{\begin{enumerate}[label={\rm(\arabic*)}]}
	{\end{enumerate}}

\begin{document}


\author{Jangwon Ju}
\address{Department of Mathematics, University of Ulsan,
 Ulsan 44610, Republic of Korea}
\email{jangwonju@ulsan.ac.kr}

\author{Daejun Kim}
\address{Research Institute of Mathematics, Seoul National University,
	Seoul 08826, Republic of Korea}
\email{goodkdj@snu.ac.kr}

\thanks{This work of the first author was supported by the National Research Foundation of Korea(NRF) grant funded by the Korea government(MSIT) 
(NRF-2019R1F1A1064037)}

\subjclass[2010]{11E12, 11E25, 11E20}

\keywords{Universal sums of pentagonal numbers, Generalized Cauchy's lemma}

\thanks{}


\title[The pentagonal theorem of $63$]{The pentagonal theorem of sixty-three and generalizations of Cauchy's lemma}

\begin{abstract} 
	In this article, we study the representability of integers as sums of pentagonal numbers, where a pentagonal number is an integer of the form $P_5(x)=\frac{3x^2-x}{2}$ for some non-negative integer $x$.
	In particular, we prove the ``pentagonal theorem of $63$", which states that a sum of pentagonal numbers represents every non-negative integer if and only if it represents the integers
	\begin{center}
	$1$, $2$, $3$, $4$, $6$, $7$, $8$, $9$, $11$, $13$, $14$, $17$, $18$, $19$, $23$, $28$, $31$, $33$, $34$, $39$, $42$, and $63$.
	\end{center}
	We also introduce a method to obtain a generalized version of Cauchy's lemma using representations of binary integral quadratic forms by quaternary quadratic forms, which plays a crucial role in proving the results.
\end{abstract}

\maketitle

\section{Introduction}

The history on the representations of non-negative integers as sums of polygonal numbers dates back to 1638, as Fermat asserted that every non-negative integer is written as a sum of at most $m$ $m$-gonal numbers.
This was proved by Lagrange in 1770, Gauss in 1796, and Cauchy in 1815 in the cases when $m=4$, $m=3$, and $m\ge5$, respectively.
More precisely, for any positive integer $m\ge3$, an integer of the form 
$P_m(x)=\frac{m-2}{2}(x^2-x)+x$
for some $x\in\nn:=\n\cup\{0\}$ is called a {\em polygonal number of order $m$}, or simply an {\em $m$-gonal number}.
For an $\aaa=(a_1,\ldots,a_k) \in \n^k$, and an $\bm{x}=(x_1,\ldots,x_k)\in \n_0^k$, we define the sum
$$
P_{m,\aaa}(\bm{x}):=\sum_{i=1}^k a_i P_m(x_i).
$$
In general, the sum $P_{m,\aaa}$ is said to be {\em universal} over $\nn$ if for any non-negative integer $N$, the diophantine equation
\begin{equation}\label{eqPrepN}
N=P_{m,\aaa}(\bm{x})
\end{equation}
has a solution $\bm{x}\in \n_0^k$. If Equation \eqref{eqPrepN} is solvable over $\nn$, we say the sum $P_{m,\aaa}$ {\em represents} an integer $N$ over $\nn$, and write $N\pra P_{m,\aaa}$.
Fermat's claim may then be restated as for $\aaa=(1,\ldots,1)$ of length $m$, the sum $P_{m,\aaa}$ is {\em universal} over $\nn$.

The Fermat's polygonal number theorem was generalized in many directions.
In 1862, Liouville generalized the Gauss's theorem by proving that for $\aaa=(a_1,a_2,a_3)\in\n^3$ with $a_1\le a_2\le a_3$, the sum $P_{3,\aaa}$ is universal over $\nn$ if and only if $\aaa$ is one of the following:
$$
(1,1,1), \ (1,1,2), \ (1,1,4),\ (1,1,5),\ (1,2,2) ,\ (1,2,3) ,\ (1,2,4).
$$
An analogous generalization of Lagrange's four square theorem was concerned by Ramanujan in 1917 and was completed by Dickson \cite{Di1}. 
Moreover, one may easily observe from the $15$-Theorem of Conway and Schneeberger, which was elegantly reproved by Bhargava \cite{B}, that for any $\aaa\in \n^4$ the sum $P_{4,\aaa}$ is universal over $\nn$ if and only if it represents the integers $1$, $2$, $3$, $5$, $6$, $7$, $10$, $14$, and $15$ over $\nn$. 

We refer the readers to Duke's survey paper \cite{Du} for a more complete history of related questions on sums of polygonal numbers, and Conway's paper \cite{C} for the history of works on universal quadratic forms from Lagrange's to the $290$-conjecture, which is now the $290$-Theorem as was proved by Bhargava and Hanke \cite{BH}. 

Recently, in 2013, Bosma and Kane \cite{BK} proved that for any $\aaa\in\n^k$, the sum $P_{3,\aaa}$ is universal over $\nn$ if and only if it represents the integers $1$, $2$, $4$, $5$, and $8$ over $\nn$.
The aim of this article is to prove the following ``pentagonal theorem of $63$" on the sums of pentagonal numbers:

\begin{thm}\label{63thm}
The sum $P_{5,\aaa}$ of pentagonal numbers is universal over $\nn$ if and only if it represents the integers 
$$\text{$1$, $2$, $3$, $4$, $6$, $7$, $8$, $9$, $11$, $13$, $14$, $17$, $18$, $19$, $23$, $28$, $31$, $33$, $34$, $39$, $42$, and $63$.}$$
\end{thm}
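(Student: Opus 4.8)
The plan is to reduce Theorem~\ref{63thm} to a finite verification by an \emph{escalator tree} argument in the spirit of Bhargava's proof of the $15$-theorem, but adapted to pentagonal numbers via the substitution $P_5(x)=\frac{3x^2-x}{2}$, equivalently $24P_5(x)+1=(6x-1)^2$. First I would set up the dictionary between the sum $P_{5,\aaa}(\bm x)=\sum a_i P_5(x_i)$ representing $N$ over $\nn$ and the ternary-quadratic-style equation $24N+\sum a_i = \sum a_i(6x_i-1)^2$ with each $6x_i-1\equiv -1\Mod 6$; this congruence restriction is precisely what prevents a direct application of classical quadratic-form machinery and is the reason a generalized Cauchy lemma is needed. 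The strategy is: build the tree of ``escalator'' sums $P_{5,\aaa}$ by, starting from the empty sum, repeatedly appending the smallest coefficient $a_{k+1}$ equal to the smallest integer not yet represented by the current truncation; the list $1,2,3,4,6,7,8,9,11,13,14,17,18,19,23,28,31,33,34,39,42,63$ will turn out to be exactly the set of ``new'' truant values that can arise along the tree, and by construction any $\aaa$ representing all of them must dominate (coefficient-wise, after sorting) some universal node of the tree.

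The key steps, in order, are as follows. (1) Show the tree is finite and has bounded depth: once enough coefficients are chosen, the quaternary (or higher) sub-sum $a_1P_5(x_1)+\cdots+a_4P_5(x_4)$ already forces universality, so no node needs more than four or five coefficients before it becomes universal; this is where I would invoke the generalized Cauchy lemma from the excerpt, which converts ``a binary quadratic form is represented by a quaternary quadratic form'' statements into ``every sufficiently large $N$ with the right congruence behavior is represented by the pentagonal sum.'' (2) For each candidate quaternary (and the finitely many ternary) escalator $\aaa$, prove genuine universality over $\nn$: passing to $24N+\sigma$ with $\sigma=\sum a_i$, one gets a quaternary quadratic form $Q_{\aaa}(\bm y)=\sum a_i y_i^2$ to be represented with all $y_i\equiv -1\Mod 6$; handle the large $N$ by local densities / the Davenport--Cassels or circle-method input packaged in the generalized Cauchy lemma (representations of the relevant binary forms by $Q_{\aaa}$ in the correct residue classes), and handle the finitely many small $N$ by direct computation. (3) Verify the ternary non-universal escalators each fail to represent one of the $22$ listed integers, and that the $22$ integers are each genuinely needed (each is the truant of some node), so the list is minimal. (4) Conclude: if $P_{5,\aaa}$ represents all $22$ integers, then walking down the tree it must refine a universal node, hence is universal; the converse is trivial.

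The main obstacle I expect is step (2) for the quaternary escalators: unlike the quadratic-form case, the representation must respect the congruence $y_i\equiv -1\Mod 6$, so one cannot simply quote the $290$-theorem or Kloosterman-refinement bounds for $\sum a_i y_i^2$. The heart of the matter is therefore the generalized Cauchy's lemma and its proof via representations of binary integral quadratic forms by quaternary ones: one needs, for each escalator, a binary form $B$ (encoding two of the $y_i$ in the right progression with the other two free) that is represented by $Q_{\aaa}$ whenever the obvious local conditions hold, with enough uniformity to cover all large $N$. Establishing the required representation theorems for these specific quaternary forms---checking class numbers, spinor genus obstructions, and the finitely many exceptional square classes---is the technically heaviest part, and where most of the paper's computation will live. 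A secondary, more bookkeeping-heavy obstacle is making the escalator tree genuinely finite and writing down all its nodes: the branching at small coefficients ($a_i\in\{1,2,3,4,6,\dots\}$) is wider than in the triangular case treated by Bosma--Kane because of the $24$ in the denominator, so care is needed to show the depth stays bounded and the truant set does not exceed the claimed $22$ values.
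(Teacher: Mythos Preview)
Your escalator framework in steps (1), (3), (4) is correct and is exactly what the paper does. The genuine gap is in step (2), and it is precisely the obstacle the paper flags in Remark~1.4(4): the substitution $24N+\sum a_i=\sum a_i y_i^2$ with the congruence $y_i\equiv -1\Mod 6$ only recovers $x_i\in\z$, not $x_i\in\nn$. If $y_i=-7$ then $x_i=-1$, and $P_5(-1)=2$ is not a pentagonal number in the sense of this paper. No amount of local-density, circle-method, or spinor-genus analysis on $Q_\aaa$ with a mod-$6$ congruence will force the \emph{sign} of $y_i$; your plan as written proves the (already known) result $\gamma_5=109$ for generalized pentagonal numbers over $\z$, not the theorem over $\nn$.

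You have also misread what the generalized Cauchy lemma does. It is not a device for imposing $y_i\equiv -1\Mod 6$ on a single quadratic equation. Rather, for a given $N$ one chooses integers $a,b$ with $N=\tfrac32(a-b)+b$ and seeks a \emph{simultaneous} solution of
\[
a=\sum_i a_i x_i^2,\qquad b=\sum_i a_i x_i.
\]
Solving this system is equivalent to finding a representation $\sigma:[A,b,a]\to L_\aaa$ of the binary form $[A,b,a]$ (where $A=\sum a_i$) with the first basis vector pinned to $w_\aaa=w_1+w_2+w_3+w_4$; this is what the machinery of Section~2 (the sets $\mathfrak{S}_s$, the relations $K\prec_{A,(\alpha,\beta),s}L$) is built to produce. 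The non-negativity $x_i\ge 0$ is then forced not by a congruence but by the Cauchy--Schwarz-type inequality $(A-a_1)a\le b^2$ (Lemma~3.1(2)), which one arranges by choosing $b$ in the interval $I_{N,A}$ of Lemma~3.4. Your description of the binary form as ``encoding two of the $y_i$ in the right progression with the other two free'' is not what is happening; the binary form encodes the pair $(a,b)$, and the pinned-vector condition is what makes the linear equation come out right.
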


Note that if $P_{5,\aaa}$ is universal over $\nn$ for an $\aaa=(a_1,\ldots,a_k)\in \n^k$, then so is $P_{5,\aaa'}$ for any $\aaa'\in\n^k$ obtained by some permutation on the components of $\aaa$. Hence we may assume without loss of generality that 
$$
a_1\le a_2\le  \cdots \le a_k.
$$
Using the escalator method introduced by Bhargava in \cite{B}, it is not hard to conclude that in order for $P_{5,\aaa}$ with $\aaa\in\n^k$ to be universal over $\nn$, we necessarily have $k\ge 4$ and  $(a_1,a_2,a_3,a_4)\in\mathcal{A}'$, where $\mathcal{A}'$ is the set consisting of the following vectors:
$$
\begin{array}{lll}
(1,1,1,a_4) \text{ with }  1\le a_4 \le 4, & (1,1,2,a_4) \text{ with }  2\le a_4 \le 9,\\
(1,1,3,a_4) \text{ with }  3\le a_4 \le 7, & (1,2,2,a_4) \text{ with }  2\le a_4 \le 6,\\
(1,2,3,a_4) \text{ with }  3\le a_4 \le 9, & (1,2,4,a_4) \text{ with }  4\le a_4 \le 8.
\end{array}
$$
Theorem \ref{63thm} will be established directly once we determine the set 
of all integers that are represented by $P_{5,\aaa}$ for each vector $\aaa$ in the set $\mathcal{A}=\mathcal{A}'\cup \{(1,2,4,12)\}$.
We refer the readers to Section \ref{sec-escalation} for this escalation argument in detail.
Let 
$$E(P_{5,\aaa}):= \nn \setminus \{ N \in \nn \mid N\pra P_{5,\aaa}\}$$
denote the set of all non-negative integers not represented by $P_{5,\aaa}$ over $\nn$. 
From the above viewpoint, the following theorem is the most crucial part of this article.

\begin{thm}\label{thmquatpolyrep}
Let $\aaa\in \mathcal{A}$ be a vector listed in the above, and let $N_\aaa$ be a constant given in Table \ref{table-data1} or \ref{table-data2}. 
\begin{enumerate}[label={\rm(\arabic*)},leftmargin=*]
\item If $\aaa\neq (1,2,4,5)$, then we have $N\pra P_{5,\aaa}$ for any integer $N$ with $N\ge N_\aaa$.
In particular, the set $E(P_{5,\aaa})$ of all non-negative integers not represented by $P_{5,\aaa}$ over $\nn$ is determined as  in Table \ref{table-data1} or \ref{table-data2}.
\item If $\aaa=(1,2,4,5)$, then we have $N\pra P_{5,\aaa}$ for any integer $N$ with $N\ge N_\aaa$ and $N \Mod{12} \in \{0,3,4,6,7,8,9,11\}$.
\end{enumerate}

\end{thm}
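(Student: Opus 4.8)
First I would convert the statement into a problem about quadratic forms. Fix $\aaa=(a_1,\dots,a_k)\in\mathcal A$ and set $s=a_1+\dots+a_k$. Since $24P_5(x)+1=(6x-1)^2$, multiplying \eqref{eqPrepN} by $24$ gives the equivalence
$$
N\pra P_{5,\aaa}\iff 24N+s=\textstyle\sum_{i=1}^k a_i t_i^2\ \text{ for some }t_i\equiv -1\ (\mathrm{mod}\ 6)\ \text{with }t_i\ge -1 .
$$
Hence Theorem~\ref{thmquatpolyrep} is equivalent to showing that, for all $N\ge N_\aaa$ (and, when $\aaa=(1,2,4,5)$, additionally for $N$ in the listed residues modulo $12$), the number $M:=24N+s$ is represented by the diagonal quaternary form $Q_\aaa(\bm t)=\sum a_i t_i^2$ by a vector lying in a fixed class modulo $6$ \emph{and} in the cone $\{t_i\ge -1\}$; a finite check of the range $0\le N<N_\aaa$ then determines $E(P_{5,\aaa})$ and completes Tables~\ref{table-data1}--\ref{table-data2}.

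\textbf{Step 2: a generalized Cauchy's lemma.} Ignoring the cone for a moment, representability of $M$ by $Q_\aaa$ in a fixed class modulo $6$ is standard, since every $\aaa\in\mathcal A$ has length $4$: the weight-$2$ theta series decomposes as Eisenstein plus cusp form, the Eisenstein coefficient of $M$ is $\gg M^{1-\e}$ whenever $M$ is everywhere locally represented, and the cuspidal coefficient is $O(M^{1/2+\e})$ by Deligne, so all large locally represented $M$ are class-represented. The genuinely new difficulty — the feature separating sums of pentagonal numbers from sums of triangular numbers — is the one-sided condition $t_i\ge -1$, i.e.\ that we use $P_5(x)$ only for $x\in\nn$ and not for all $x\in\z$. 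To enforce it I would write $\bm t=(-1,\dots,-1)+6\bm u$, so that the problem becomes: find $\bm u\in\nn^k$ with $Q_\aaa(\bm u)=a$ and $\sum a_i u_i=b$, where $b=3a-2N$ and $a$ is free to range over a subinterval of length $\asymp\sqrt N$ on which $(a,b)$ satisfies natural size inequalities (the weighted analogue of $b^2\le 4a$). Encoding these two simultaneous conditions as a representation of a binary quadratic form by $Q_\aaa$ and invoking the theory of representations of binary integral quadratic forms by quaternary quadratic forms, one obtains such a $\bm u$, and the size inequalities are exactly what force $\bm u\in\nn^k$, just as in the classical lemma. Running $a$ over its interval and checking that at least one value is everywhere locally admissible then settles the large-$N$ part.

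\textbf{Step 3: case analysis and the role of $(1,2,4,5)$.} One now carries this out for each of the finitely many $\aaa\in\mathcal A$, which reduces to verifying the local conditions at $2$, $3$ and the primes dividing the $a_i$ as $a$ runs over its interval. For $\aaa\ne(1,2,4,5)$ an admissible $a$ can always be found, giving $N\pra P_{5,\aaa}$ for every $N\ge N_\aaa$. For $\aaa=(1,2,4,5)$ the $2$-adic analysis of $t_1^2+2t_2^2+4t_3^2+5t_4^2$ on the relevant square class (together with class-number considerations for the associated binary forms) shows that an admissible $a$ occurs in the interval precisely when $N\bmod 12\in\{0,3,4,6,7,8,9,11\}$, which is the source of the congruence restriction in part (2). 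Making $N_\aaa$ explicit from the effective estimates and then running a finite search over $N<N_\aaa$ yields $E(P_{5,\aaa})$ and the tables; combined with the escalation of Section~\ref{sec-escalation} this proves Theorem~\ref{63thm}.

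\textbf{Main obstacle.} The hard part is Step 2: pushing the binary-by-quaternary representation theorem through the congruence conditions modulo $6$ while keeping the size inequalities strong enough to force $\bm u\in\nn^k$, and — above all — controlling the spinor-genus and cuspidal exceptions uniformly enough that the interval of length $\asymp\sqrt N$ is guaranteed to meet the everywhere-admissible set of $a$. It is exactly this last control that breaks down in four residue classes modulo $12$ for $\aaa=(1,2,4,5)$. A secondary nuisance is keeping every constant effective across all $\aaa\in\mathcal A$, so that the finitely many values $N<N_\aaa$ can be verified by computer.
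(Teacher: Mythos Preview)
Your overall architecture matches the paper's: reduce to the simultaneous system $\sum a_i x_i^2=a$, $\sum a_i x_i=b$ (your $u_i$ are exactly the paper's $x_i$), recognise this as a representation of the binary lattice $[A,b,a]$ by $L_\aaa=\langle a_1,\dots,a_4\rangle$, let $b$ (equivalently $a$) run over an interval of length $\asymp\sqrt N$ on which Cauchy--Schwarz forces $x_i\ge 0$, and check that an admissible pair occurs. The detour through $24N+s$ and $t_i\equiv -1\pmod 6$ is unnecessary---the paper works directly with the $x_i$---but harmless.

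There is, however, a genuine gap at the heart of Step~2. You write ``invoking the theory of representations of binary integral quadratic forms by quaternary quadratic forms, one obtains such a $\bm u$'', but this is precisely the step that requires new work. What is needed is not merely a representation $\sigma:[A,b,a]\to L_\aaa$, but one satisfying $\sigma(v_1)=w_\aaa:=w_1+w_2+w_3+w_4$; otherwise the linear constraint $\sum a_ix_i=b$ is not recovered. Two obstacles arise: (i) when the class number of $L_\aaa$ exceeds~$1$, local representability only gives $[A,b,a]\to K$ for some $K\in\gen(L_\aaa)$; (ii) even when $[A,b,a]\to L_\aaa$, the image of $v_1$ may land in the wrong $O(L_\aaa)$-orbit of $R(A,L_\aaa)$. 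The paper's actual contribution is an explicit, purely algebraic device (the sets $\mathfrak S_s\subset(\z/s\z)^2$ of Section~\ref{preliminaries}) that handles both simultaneously: for each $K\in\gen(L_\aaa)$ and each orbit of $R(A,K)$ one determines which congruence classes $(a,b)\bmod s$ can be transported by an explicit isometry to a representation with first vector $w_\aaa$, and then one chooses $s$ so that the induced set $\mathcal N(\mathfrak S_s)$ covers $\z/(s/2)\z$. No analytic input---theta decomposition, Deligne, spinor or cuspidal control---is used; this is why the bounds $N_\aaa$ are small enough (e.g.\ $1529$ for $(1,2,2,3)$) for the finite check to be feasible. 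Your analytic framing would, if made effective at all, give bounds orders of magnitude larger.

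Relatedly, the restriction for $\aaa=(1,2,4,5)$ is not a $2$-adic or binary-class-number phenomenon in your sense: it is simply that for this $\aaa$ one computes $\mathcal N(\mathfrak S_{24})=\{0,3,4,6,7,8,9,11\}$, so the transport machinery itself only reaches those residues modulo~$12$. That is why part~(2) carries the congruence hypothesis and why $E(P_{5,(1,2,4,5)})=\{13\}$ remains conjectural.
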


\begin{rmk}\label{rmk1}{\color{white} a}
\begin{enumerate}[label={\rm(\arabic*)},leftmargin=*]

\item\label{rmk1:1} For each $\aaa\in \mathcal{A}$ except $(1,2,4,5)$, the set $E(P_{5,\aaa})$ can be determined by checking directly whether or not $N\pra P_{5,\aaa}$ for any $N\le N_\aaa$.
On the other hand, we conjecture that $E(P_{5,(1,2,4,5)})=\{13\}$. We have verified this conjecture up to $10^7$, that is, we have checked $N\pra P_{5,(1,2,4,5)}$ for $N\neq 13$ up to $10^7$.

\item
Theorem \ref{thmquatpolyrep} confirms \cite[Conjecture 5.2 (ii)]{S} of Sun,   that is, the sum $P_{5,(1,b,c,d)}$ is universal over $\nn$ if $(b,c,d)$ is among the following $15$ triples:
	$$
	\begin{array}{l}
	(1,1,2),\ (1,2,2),\ (1,2,3),\ (1,2,4),\ (1,2,5),\ (1,2,6),\ (1,3,6), \\
	(2,2,4),\ (2,2,6),\ (2,3,4),\ (2,3,5),\ (2,3,7),\ (2,4,6),\ (2,4,7),\ (2,4,8).
	\end{array}
	$$
Indeed, the universality of $P_{5,(1,b,c,d)}$ over $\nn$ was already proved, for
$(b,c,d)=(1,2,2)$ and $(1,2,4)$ by Meng and Sun \cite{MS}, and for 
$(b,c,d)=(1,1,2)$, $(1,2,3)$, $(1,2,6)$, and $(2,3,4)$ by  Krachun and Sun \cite{KS}.
For each of these six vectors $\aaa=(1,b,c,d)$, they showed that $N\pra P_{5,\aaa}$ for any $N\ge N_\aaa'$ and checked the representability directly for $N\le N_\aaa'$.
We note that our method gives dramatic improvements on such a ``theoretical bound" $N_\aaa$.
For example, for $\aaa=(1,1,2,6)$, $N_\aaa=73138$ while $N_\aaa'=897099189$.
%
%
\end{enumerate}
\end{rmk}


The proof of Theorem \ref{thmquatpolyrep} for each $\aaa\in\mathcal{A}$ mainly uses a generalized version of Cauchy's lemma, of which the proof relies on finding a special representation of binary quadratic forms by quaternary quadratic forms.
We first introduce, so-called, Cauchy's lemma, which Cauchy himself used to prove the general case of Fermat's polygonal number theorem (see, for example, \cite{N1} for a proof of Cauchy's lemma and related results).\\[5pt]
\noindent {\bf Cauchy's Lemma.}
{\em
Let $a$ and $b$ be odd positive integers such that 
$$b^2<4a \quad  \text{and} \quad 3a<b^2+2b+4.$$ 
Then there exist non-negative integers $s,t,u,v$ such that}
$$
\begin{cases}
a=s^2+t^2+u^2+v^2,\\
b=s+t+u+v.
\end{cases}
$$

Let $\aaa=(a_1,a_2,a_3,a_4)\in \n^4$ with $a_1 \le a_2\le a_3\le a_4$ and let $A_\aaa=\sum_{i=1}^4 a_i$. 
In Section \ref{preliminaries} and Section \ref{sec-lem}, we develop a method to obtain a set $\mathfrak{S}_s\subset (\z/s\z)^2$ ($s\in\n$) such that the following generalization of Cauchy's lemma holds.
This is a consequence of Lemmas \ref{posrep}, \ref{eq1-latticerep}, and \ref{rep-gen-to-lattice}.\\[5pt]
\noindent {\bf Generalized Cauchy's Lemma.}
{\em Let $a$ and $b$ be positive integers such that 
	\begin{enumerate}[label={\rm(\arabic*)}]
		\item $a\equiv b\Mod{2}$ and $b^2<A_\aaa a$,
		\item $a$ and $b$ satisfy some local condition corresponding to $\aaa$,
		\item $(a,b)\Mod{s} \in \mathfrak{S}_s$,
		\item $(A_\aaa-a_1)a \le b^2$.
	\end{enumerate}
	Then there exist non-negative integers $x_1,x_2,x_3,x_4$ such that}
\begin{equation}\label{eq0}
\begin{cases}
a=a_1x_1^2+a_2x_2^2+a_3x_3^2+a_4x_4^2,\\
b=a_1x_1+a_2x_2+a_3x_3+a_4x_4.
\end{cases}
\end{equation}

Note that with the variables in Equation \eqref{eq0}, we have
$$
N_{m,a,b}:=\frac{m-2}{2}(a-b)+b=P_{m,\aaa}(x_1,x_2,x_3,x_4).
$$
In the case when $m=5$ and for each $\aaa\in\mathcal{A}$, we may suitably choose $s$ so that for any sufficiently large positive integer $N$, there is a pair $(a,b)$ of positive integers satisfying both $N=N_{5,a,b}$ and all the above three conditions. It will be done with the aid of Lemmas \ref{complete-residue} and \ref{leminterval}.
Therefore, we may conclude that $N\pra P_{5,\aaa}$. 

Returning back to the generalization of Cauchy's lemma, note that 
finding an (not necessarily non-negative) integer solution  of Equation \eqref{eq0} is equivalent to expressing the binary quadratic form $A_aX^2+2bXY+aY^2$ in variables $X$ and $Y$ as
$$
A_aX^2+2bXY+aY^2= \sum_{i=1}^4 a_i(X+x_iY)^2.
$$
Thus, this is a problem of finding a integral representation of special form of  the binary quadratic form by the diagonal quaternary quadratic form $Q_\aaa=\sum_{i=1}^4 a_iX_i^2$.

With this perspective, the second condition is exactly the local representability of quadratic forms. Hence the first and the second conditions may be considered as necessary conditions for Equation \eqref{eq0} to have an integer solution (see Lemma \ref{posrep} \ref{posrep:1}). Under those necessary conditions, the congruence condition in the third condition is a sufficient condition for Equation \eqref{eq0} to have an integer solution. 

Finally, if the inequality in the fourth condition holds, then any solution of Equation \eqref{eq0} must be non-negative (see the proof of Lemma \ref{posrep} \ref{posrep:2}).

\begin{rmk}{\color{white} a}
	\begin{enumerate}[label={\rm(\arabic*)},leftmargin=*]
		
		\item In \cite{Di2}, Dickson treated generalizations of Cauchy's lemma of the above kind for several $\aaa=(a_1,a_2,a_3,a_4)$, each of whose corresponding quadratic form $Q_\aaa=\sum_{i=1}^4 a_iX_i^2$ has {\em (genus) class number one}.
		We note that our method may also deal with the case that the class number of $Q_\aaa$ is greater than one.
				
		\item We note that this strategy may also be applied to the other cases when $m\neq 5$.
		Also, the set $\mathfrak{S}_s$ can systematically be computed once a positive integer $s$ is given.
		We refer the readers to \cite[Lemma 4.2]{K} for the proofs of the above generalization of Cauchy's lemma for several $\aaa$'s in detail.

		\item Questions on universality of the sum $P_{m,\aaa}$ over $\z$, that is, the solvability of Equation \eqref{eqPrepN} over $\bm{x}\in \z^k$ for any $N\in\nn$, have recently been studied by a number of authors, as Guy \cite{G} considered the numbers $P_m(x)$ with more general inputs $x\in\z$, which are called {\em generalized $m$-gonal numbers}.
		For example, Kane and Liu \cite{KL} proved that there exist a minimal positive integer $\gamma_m$ such that for any $\aaa\in\n^k$, the sum $P_{m,\aaa}$ is universal over $\z$ if and only if it represents every integer $N \le \gamma_m$.
		Using the arithmetic theory of quadratic forms, the constant $\gamma_m$ is determined for several small $m$; $\gamma_3=\gamma_6=8$ by Bosma and Kane \cite{BK}, $\gamma_4=15$ by $15$-Theorem, $\gamma_5=109$ by the first author \cite{J}, and $\gamma_8=60$ by Oh and the first author \cite{JO}.
		
		\item Note that for $m=3$ or $4$, the universality of $P_{m,\aaa}$ over $\z$ is equivalent to that over $\nn$. However, they become different for $m\ge 5$.
		Main difficulty that arise when concerning the problem over $\nn$ in comparison with that over $\z$ may be explained as follows.
		Classifying the sums $P_{5,\aaa}$ that are universal over $\z$ is completed by \cite{GS}, \cite{G}, \cite{J}, \cite{O}, and \cite{S}. 
		The most crucial part in proving all those results was to determine the set of integers that are represented by $P_{5,\aaa}$ over $\z$ for $\aaa=(a_1,a_2,a_3)\in\n^3$, and it was done based on the following observation;
		note that for an integer $N$, $P_{5,\aaa}(x_1,x_2,x_3)=N$ if and only if 
		\begin{equation}\label{eq2}
		24N+a_1+a_2+a_3=a_1(6x_1-1)^2+a_2(6x_2-1)^2+a_3(6x_3-1)^2.
		\end{equation}
		If an integer $X$ is coprime to $6$, then either $X$ or $-X$ is congruent to $-1$ modulo $6$. Therefore, Equation \eqref{eq2} has a solution $x_i\in\z$ if and only if 
		$$
		24N+a_1+a_2+a_3=a_1X_1^2+a_2X_2^2+a_3X_3^2
		$$ 
		has a solution $X_i\in\z$ with $\gcd(X_1X_2X_3,6)=1$.
		However, this method is no longer valid if we want to find a solution $x_i\in\nn$.
		It seems to be very difficult to determine which integers are represented by $P_{5,(a_1,a_2,a_3)}$ over $\nn$.
%
		As far as the authors' knowledge, Theorem \ref{63thm} is the first theorem of universality criterion for sums of (not generalized) polygonal numbers of order $m$ with $m\ge 5$.
	\end{enumerate}
\end{rmk}

The rest of the article is organized as follows.
In Section \ref{preliminaries}, we introduce several definitions, notations and well-known results on quadratic forms in terms of geometric language of $\z$-lattices. Section \ref{sec-lem} is devoted to introducing several lemmas that are needed in the proof of Theorem \ref{thmquatpolyrep}.
In Section \ref{quaternarysum}, we give a proof of Theorem \ref{thmquatpolyrep}.
Finally in Section \ref{sec-escalation}, we give a proof of Theorem \ref{63thm}, and we also classify all universal sums of pentagonal numbers by classifying all ``proper" ones (see Section \ref{sec-escalation} for the definition), which are listed in Table \ref{table3}.

\section{Preliminaries and general tools}\label{preliminaries}

In this section, we introduce several definitions, notations and well-known results on quadratic forms in the better adapted geometric language of quadratic spaces and lattices.
A $\z$-lattice $L=\z v_1+\z v_2+\dots+\z v_k$ of rank $k$ is a free $\z$-module equipped with non-degenerate symmetric bilinear form $B$ such that $B(v_i,v_j) \in \q$ for any $i,j$ with $1\le i, j \le k$. 
The $k\times k$ matrix $(B(v_i,v_j))$ is called the corresponding symmetric matrix to $L$, and we write
$$
L = (B(v_i,v_j)).
$$
The corresponding quadratic map is defined by $Q(v)=B(v,v)$ for any $v \in L$.
If $(B(v_i,v_j))$ is diagonal, then we simply write $L=\langle Q(v_1),\ldots,Q(v_k) \rangle$.

We say a $\z$-lattice $L$ is {\it positive definite} if $Q(v)>0$ for any non-zero vector $v \in L$, and we say $L$ is {\it integral} if $B(v,w) \in \z$ for any $v,w \in L$.
Throughout this article, we always assume that a $\z$-lattice is positive definite and integral.

For two $\z$-lattices $\ell$ and $L$, we say $\ell$ is {\em represented} by $L$ if there is a linear map $\sigma : \ell \to L$ such that 
$$
B(\sigma(x),\sigma(y))=B(x,y) \quad \text{for any $x,y \in \ell$}. 
$$
In this case, we write $\ell \ra L$, and such a linear map $\sigma$ is called a {\em representation} from $\ell$ to $L$.
We say $\ell$ and $L$ are {\em isometric} to each other if $\ell\ra L$ and $L\ra \ell$, and we write $\ell\cong L$.
For any prime $p$, we define localization of $L$ at $p$ by $L_p=L\otimes_\z \z_p$. 
We say $\ell$ is {\em locally represented} by $L$ if there is a local representation $\sigma_p : \ell_p \ra L_p$ which preserves the bilinear forms for any prime $p$. 
For a $\z$-lattice $L$, we define the {\em genus} $\gen(L)$ of $L$ as
$$
\gen(L)=\{K \text{ on } \q L \mid K_p \cong L_p \text{ for any prime } p\},
$$
where $\q L= \{ \alpha v \mid \alpha \in \q , v \in L\}$ is the quadratic space on which $L$ lies. The isometric relation induces an equivalence relation on $\gen(L)$, and we call the number of different equivalence classes in $\gen(L)$, the {\em  class number} of $L$.

Any unexplained notations and terminologies can be found in \cite{OM2}.   

The following is well-known local-global principle for $\z$-lattices.

\begin{thm}\label{localglobal}
	Let $\ell$ and $L$ be $\z$-lattices. If $\ell$ is locally represented by $L$, then $\ell\ra L'$ for some $L'\in\gen(L)$.
	Moreover, if the class number of $L$ is one, then $\ell \ra L$  if and only if $\ell$ is locally represented by $L$.
\end{thm}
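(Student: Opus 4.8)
The statement to prove is the classical theorem of Kneser/O'Meara: if $\ell$ is locally represented by $L$ at every prime, then $\ell$ is represented by some lattice in $\gen(L)$, and if moreover $h(L)=1$ then $\ell \ra L$ outright. The second assertion is an immediate corollary of the first, since $h(L)=1$ means $\gen(L)$ consists of a single isometry class, namely that of $L$ itself; so the entire content is in the first assertion, and the plan is to prove that.

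First I would reduce to a statement about quadratic spaces and adelic orbits. The hypothesis "$\ell$ is locally represented by $L$" means: for every prime $p$ (including, via the archimedean place, the positive-definiteness compatibility, which is automatic here since both are positive definite and $\rank \ell \le \rank L$ is forced), there is an isometric embedding $\sigma_p \colon \ell_p \hookrightarrow L_p$. In particular $\q\ell$ is represented by $\q L$ locally everywhere, hence by the Hasse-Minkowski theorem globally: there is an isometry $\q\ell \hookrightarrow \q L$. Fix such a global embedding, and thereby regard $\ell$ as a sublattice of the quadratic space $V=\q L$, writing $W = \q \ell \subseteq V$ and $U = W^{\perp}$ so that $V = W \perp U$. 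The task becomes: find $L' \in \gen(L)$ with $\ell \subseteq L'$. Now use the orthogonal complement: set $M = L \cap U$ (a lattice on $U$) and consider the lattice $\ell \perp M$ sitting inside $V$. At each prime $p$, the local embedding $\sigma_p$ together with a suitable adjustment of the complement shows that $(\ell \perp M)_p$ and $L_p$ lie in the same $O(V_p)$-orbit of lattices on $V_p$ — more precisely, one shows $\ell_p \perp M_p$ is represented by $L_p$ with the same rank, and by comparing discriminants/scales one controls the index. The standard device here is: since $h(L)$ may be $>1$ we cannot hope $\ell \perp M \hookrightarrow L$, but we can find, at each $p$, an isometry $\tau_p \in O(V_p)$ with $\tau_p(\ell_p \perp M_p) \subseteq L_p$, and then invoke the strong approximation / genus description to patch these into a global lattice $L'$ on $V$ with $L'_p \cong L_p$ for all $p$ (so $L' \in \gen(L)$) and $\ell \subseteq \ell \perp M \subseteq L'$.

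The key steps, in order: (1) apply Hasse-Minkowski to upgrade local space-representations to a global embedding $\q\ell \hookrightarrow \q L$, and fix the ambient space $V=\q L$ with the splitting $V = W \perp U$; (2) for each prime $p$, construct from the given local lattice-embedding $\sigma_p \colon \ell_p \hookrightarrow L_p$ a local isometry $\tau_p$ of $V_p$ carrying $\ell_p \perp (L\cap U)_p$ into $L_p$, checking that $\tau_p = \mathrm{id}$ for all but finitely many $p$ (those where $\sigma_p$ can be taken to respect the natural splitting, e.g. all $p$ with $p \nmid 2\,dL\,d\ell$); (3) define $L' $ to be the lattice on $V$ determined locally by $L'_p = \tau_p^{-1}(L_p)$ for the finitely many bad $p$ and $L'_p = L_p$ elsewhere — this is well-defined because the $L'_p$ agree with a fixed lattice at almost all $p$ — and observe $L' \in \gen(L)$ by construction and $\ell \subseteq L'$ since locally $\ell_p \subseteq \tau_p^{-1}(L_p) = L'_p$ for every $p$, and a lattice is the intersection of its localizations; (4) deduce the class-number-one case as the special instance $\gen(L) = \{[L]\}$.

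The main obstacle is step (2): producing, at each bad prime, a genuine \emph{lattice} embedding into $L_p$ of the orthogonal sum $\ell_p \perp (L\cap U)_p$ rather than merely of $\ell_p$. The point is that the given $\sigma_p(\ell_p)$ need not be primitive in $L_p$ nor split off an orthogonal complement inside $L_p$, so one must massage $\sigma_p$ — using the local structure theory of quadratic lattices over $\z_p$ (Jordan splittings, and for $p=2$ the more delicate classification with the improper forms) — to arrange that the complement of $\sigma_p(\ell_p)$ in $L_p$ contains a copy of $(L \cap U)_p$, possibly after replacing $M = L\cap U$ by a suitable lattice in its genus on $U$. This is exactly the technical heart of O'Meara's treatment, and I would cite \cite{OM2} (specifically the representation theory in §102–§106) for the local maneuvers, presenting here only the global assembly in detail and referring to the local lemmas. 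Since the paper explicitly says "Any unexplained notations and terminologies can be found in \cite{OM2}," it is consistent to attribute Theorem \ref{localglobal} to that source and give the proof in this streamlined genus-patching form.
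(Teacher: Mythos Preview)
Your plan is essentially the standard proof and is correct in outline, but you should know that the paper itself gives no argument at all: its entire proof of Theorem~\ref{localglobal} is the single sentence ``See 102:5 of \cite{OM2}.'' So there is nothing to compare against beyond the citation you yourself invoke.

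Your sketch follows the classical route in O'Meara's book: use Hasse--Minkowski to get a global space embedding $\q\ell \hookrightarrow \q L$, then at each prime adjust by a local isometry $\tau_p \in O(V_p)$ so that $\ell_p \subseteq \tau_p^{-1}(L_p)$, with $\tau_p = \mathrm{id}$ almost everywhere, and assemble the $\tau_p^{-1}(L_p)$ into a global lattice $L' \in \gen(L)$ containing $\ell$. This is exactly the content of 102:5 in \cite{OM2}. One small comment: your step (2) is phrased more elaborately than necessary---you do not actually need to embed $\ell_p \perp (L\cap U)_p$ into $L_p$, only $\ell_p$ itself, since the definition of $L'$ via $L'_p = \tau_p^{-1}(L_p)$ already guarantees $L'_p \cong L_p$ and $\ell_p \subseteq L'_p$; the orthogonal complement $M = L \cap U$ plays no role in the construction of $L'$. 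With that simplification your argument is clean, and in any case it is entirely appropriate here simply to cite \cite{OM2} as the paper does.
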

\begin{proof}
	See 102:5 of \cite{OM2}.
\end{proof}
Due to the above theorem, we also say that $\ell$ is {\em locally represented} by $L$ if $\ell\ra L'$ for some $\z$-lattice $L'\in\gen(L)$, and we write $\ell \ra \gen(L)$.

For a binary $\z$-lattice $\ell=\z v_1 + \z v_2$, we simply write $\ell=[Q(v_1),B(v_1,v_2),Q(v_2)]$.
For an integer $A$, the set of all vectors $v\in L$ such that $Q(v)=A$ will be denoted by $R(A,L)$.

Let $A,a\in \n$ and $b\in\z$ be integers such that $Aa-b^2>0$, and let $\ell=\z v_1 + \z v_2=[A,b,a]$. 
Let $L$ be a $\z$-lattice and assume that there is a representation $\sigma: \ell \ra L$. Since $Q(\sigma(v_1))=Q(v_1)=A$, we necessarily have $\sigma(v_1)\in R(A,L)$.
Now, let $w$ be any vector in $R(A,L)$.
Our aim of the rest of this section is to develop a method to find a representation 
$$
\sigma : \ell \ra L \quad \text{such that} \quad \sigma(v_1)=w
$$
provided that $\ell \ra \gen(L)$. 
We should clear the following two general hurdles.
\begin{newenum}
	\item If the class number of $L$ is greater than $1$, 
	then the condition $\ell\ra \gen(L)$ does not guarantee $\ell\ra L$.
	\item Even though we know that $\ell$ is representable by $L$, 
	we cannot guarantee that there exists $\sigma:\ell\ra L$ with $\sigma(v_1)=w$ unless $|R(A,L)/O(L)|=1$.
\end{newenum}
Here, $R(A,L)/O(L)$ is the set of orbits induced by the action of $O(L)$ on $R(A,L)$ defined by $\sigma\cdot v=\sigma(v)$ for any $\sigma \in O(L)$ and $v\in R(A,L)$. Lemmas \ref{tool1} and \ref{tool2} which are stated below will explain methods to overcome the above two matters, respectively.

Let $K$ and $L$ be $\z$-lattices on the quadratic space $V$, and
let $A$ and $s$ be positive integers.
For $v\in R(A,K)$ and $(\alpha,\beta)\in (\z/s\z)^2$, we define
$$
R_{v}(K,\alpha,\beta,s):=\{u \in K/sK \mid  Q(u)\equiv \alpha \Mod{s}, \ B(u,v)\equiv \beta \Mod{s}\}
$$
and
$$
R_{v}(K,L,s):=\{\tau \in O(V) \mid \tau(sK)\subset L, \ \tau(v)\in L\}.
$$
Note that the above sets depend on the positive integer $A$ although it is not indicated.
A coset (or, a vector in the coset) $u\in R_v(K,\alpha,\beta,s)$ is said to be {\em good} (with respect to $K,L,(\alpha,\beta),s,$ and $v$) if there exist a $\tau \in R_v(K,L,s)$ such that 
$$
\text{
$\tau(\tilde{u})\in L$ for any $\tilde{u}\in K$ with $\tilde{u}\equiv u \Mod{sK}$.}
$$
Indeed, the above condition holds if $\tau(\tilde{u})\in L$ is satisfied for a vector $\tilde{u}$ in the coset $u$.
The set of all good cosets (or, all good vectors) in $R_v(K,\alpha,\beta,s)$ is denoted by $R_v^L(K,\alpha,\beta,s)$. If $R_v(K,\alpha,\beta,s)=R_v^L(K,\alpha,\beta,s)$ for any $v\in R(A,K)$, we write
$$
K \prec_{A,(\alpha,\beta),s} L.
$$
The following lemma is the first tool of our method.
\begin{lem}\label{tool1}
Let $A,s$ be positive integers, $(\alpha,\beta)\in (\z/s\z)^2$, and define
$$
S_{A,\alpha,\beta,s}:=\{[A,b,a] \mid (a,b)\equiv (\alpha,\beta) \Mod{s}, \ Aa-b^2>0\}.
$$
Let $K,L$ be two $\z$-lattices on the same quadratic space such that $K \prec_{A,(\alpha,\beta),s} L$, and let $\ell$ be a binary $\z$-lattice in $S_{A,\alpha,\beta,s}$.
Then $\ell \ra K$ implies $\ell\ra L$.
In particular, if $M \prec_{A,(\alpha,\beta),s} L$ for any $M\in\gen(L)$, then 
$\ell \ra \gen(L)$ implies $\ell\ra L$.
\end{lem}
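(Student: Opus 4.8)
The plan is to start from a representation $\sigma\colon\ell\ra K$ and modify it into a representation $\ell\ra L$ using the hypothesis $K\prec_{A,(\alpha,\beta),s}L$. Write $\ell=\z v_1+\z v_2=[A,b,a]$ with $(a,b)\equiv(\alpha,\beta)\Mod s$ and $Aa-b^2>0$, and set $w:=\sigma(v_1)\in R(A,K)$ and $u:=\sigma(v_2)\in K$. The first step is to check that the image $u\bmod sK$ lands in $R_w(K,\alpha,\beta,s)$: indeed $Q(u)=Q(v_2)=a\equiv\alpha\Mod s$ and $B(u,w)=B(v_2,v_1)=b\equiv\beta\Mod s$, so this membership is immediate.

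Next I would invoke the hypothesis. Since $K\prec_{A,(\alpha,\beta),s}L$, we have $R_w(K,\alpha,\beta,s)=R_w^L(K,\alpha,\beta,s)$, so the coset of $u$ is \emph{good}: there is an isometry $\tau\in R_w(K,L,s)$ of the ambient quadratic space $V$ with $\tau(sK)\subset L$, $\tau(w)\in L$, and $\tau(\tilde u)\in L$ for every $\tilde u\in K$ congruent to $u$ modulo $sK$ — in particular $\tau(u)\in L$. I then claim that $\tau\circ\sigma\colon\ell\ra L$ is the desired representation. It is certainly an isometry from $\q\ell$ into $V$ preserving the bilinear form, since both $\sigma$ and $\tau$ do; the only thing to verify is that it maps $\ell$ into $L$. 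But $\ell=\z v_1+\z v_2$, and $(\tau\circ\sigma)(v_1)=\tau(w)\in L$ while $(\tau\circ\sigma)(v_2)=\tau(u)\in L$, so $(\tau\circ\sigma)(\ell)\subseteq L$. Hence $\ell\ra L$.

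For the ``in particular'' clause, suppose $M\prec_{A,(\alpha,\beta),s}L$ for every $M\in\gen(L)$ and that $\ell\ra\gen(L)$. By definition of local representability (and Theorem \ref{localglobal}), $\ell\ra L'$ for some $L'\in\gen(L)$. Since $L'\prec_{A,(\alpha,\beta),s}L$ by assumption, the first part of the lemma applied with $K=L'$ yields $\ell\ra L$, as claimed.

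The only delicate point is the bookkeeping around the definition of a good coset: one must be careful that the \emph{same} $\tau$ works simultaneously for $w$ (which is automatic, as $\tau\in R_w(K,L,s)$ forces $\tau(w)\in L$) and for the chosen representative $u$ of its coset, and that ``good'' was defined precisely so that $\tau(\tilde u)\in L$ holds for \emph{every} lift $\tilde u$, hence in particular for $u=\sigma(v_2)$. Beyond this, the argument is formal: no computation with the forms is needed, only the functoriality of isometries and the fact that $\ell$ is generated by $v_1,v_2$. Thus I expect no real obstacle — the content of the lemma is entirely in setting up the definitions of $R_v(K,\alpha,\beta,s)$, $R_v(K,L,s)$, and goodness, which has already been done above.
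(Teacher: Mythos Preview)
Your argument is correct and is essentially the same as the paper's own proof: set $v=\sigma(v_1)$, $u=\sigma(v_2)$, observe $u\in R_v(K,\alpha,\beta,s)$, use goodness to get $\tau\in R_v(K,L,s)$ with $\tau(u)\in L$, and take $\tau\circ\sigma$. You have merely spelled out the ``in particular'' clause and the bookkeeping about good cosets a bit more explicitly than the paper does.
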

\begin{proof}
Let $\ell=\z v_1 + \z v_2 = [A,b,a]$ and let $\sigma:\ell \ra K$ be a representation. Put $v:=\sigma(v_1)$ and $u:=\sigma(v_2)$. Note that $v,u\in K$ and
$$
Q(v)=A,\ Q(u)=a\equiv \alpha \Mod{s},\ \text{and} \ B(u,v)=b\equiv \beta\Mod{s}.
$$
Therefore, $v\in R(A,K)$ and $u \in R_v(K,\alpha,\beta,s)$. Since  $K \prec_{A,(\alpha,\beta),s} L$, there exists $\tau\in R_v(K,L,s)$ such that $\tau(u)\in L$. Note that $\tau(v)\in L$ by the definition of the set $R_v(K,L,s)$. Therefore, $\tau\circ \sigma$ is a representation from $\ell$ into $L$. This completes the proof.
\end{proof}

To introduce the second tool, we need to define something more, which is quite similar to the above. 
Let $L$ be a $\z$-lattice on a quadratic space $V$, $s$ be a positive integer, and $(\alpha,\beta)\in(\z/s\z)^2$. For $v,w\in R(A,L)$, we define
$$
R_{v,w}(L,s):=\{\tau \in R_{v}(L,L,s) \mid \tau(v)=w\},
$$
A coset (or, a vector in the coset) $u\in R_{v}(L,\alpha,\beta,s)$ is said to be {\em good} with respect to $L,(\alpha,\beta),s,v,$ and $w$ if there exists a $\tau \in R_{v,w}(L,s)$ such that 
$$
\text{
	$\tau(\tilde{u})\in L$ for any $\tilde{u}\in L$ with $\tilde{u}\equiv u \Mod{sL}$.}
$$
The set of all good vectors in $R_v(L,\alpha,\beta,s)$ is denoted by $R_v^w(L,\alpha,\beta,s)$. We write
$$
L \prec_{A,(\alpha,\beta),s} w
$$
if $R_v(L,\alpha,\beta,s)=R_v^w(L,\alpha,\beta,s)$ for any $v\in R(A,L)$. 

\begin{lem}\label{tool2}
Let $A,s$ be positive integers, $(\alpha,\beta)\in (\z/s\z)^2$, 
and let 
$$
\ell=\z v_1+\z v_2=[A,b,a]\in S_{A,\alpha,\beta,s},
$$  
where the set $S_{A,\alpha,\beta,s}$ is defined in Lemma \ref{tool1}.
Let $L$ be a $\z$-lattice and let $w\in R(A,L)$ be a vector such that $L \prec_{A,(\alpha,\beta),s} w$.
If $\ell \ra L$, then there exists a representation $\sigma :\ell\ra L$ such that $\sigma(v_1)=w$.
\end{lem}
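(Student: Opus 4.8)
The plan is to argue exactly as in the proof of Lemma \ref{tool1}: take a representation of $\ell$ into $L$ that is guaranteed to exist, and then post-compose it with an ambient isometry, furnished by the hypothesis $L \prec_{A,(\alpha,\beta),s} w$, which simultaneously pushes the image of $v_1$ onto $w$ and keeps the image of $v_2$ inside $L$.

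First I would fix any representation $\phi : \ell \ra L$, which exists by the assumption $\ell \ra L$, and set $v := \phi(v_1)$ and $u := \phi(v_2)$. Since $\phi$ preserves the bilinear form and $\ell = \z v_1 + \z v_2 = [A,b,a]$, we get $Q(v) = A$, $Q(u) = a$, and $B(u,v) = b$; hence $v \in R(A,L)$. Because $\ell \in S_{A,\alpha,\beta,s}$ we have $a \equiv \alpha$ and $b \equiv \beta \Mod{s}$, so the class of $u$ in $L/sL$ satisfies $Q(u) \equiv \alpha$ and $B(u,v) \equiv \beta \Mod{s}$, i.e. it lies in $R_v(L,\alpha,\beta,s)$.

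Next I would invoke the hypothesis $L \prec_{A,(\alpha,\beta),s} w$. Applied to this particular $v \in R(A,L)$ it says $R_v(L,\alpha,\beta,s) = R_v^w(L,\alpha,\beta,s)$, so the coset of $u$ is good with respect to $L,(\alpha,\beta),s,v,$ and $w$. By the definition of a good coset there is a $\tau \in R_{v,w}(L,s)$ such that $\tau(\tilde u) \in L$ for every $\tilde u \in L$ with $\tilde u \equiv u \Mod{sL}$; in particular $\tau(u) \in L$. Unwinding $R_{v,w}(L,s) \subset R_v(L,L,s)$, the map $\tau$ is an isometry of the quadratic space $V = \q L$ with $\tau(sL) \subset L$, $\tau(v) \in L$, and, crucially, $\tau(v) = w$.

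Finally I would put $\sigma := \tau \circ \phi$. As a composite of a representation and an isometry of $V$, the map $\sigma$ preserves the bilinear form; moreover $\sigma(v_1) = \tau(v) = w \in L$ and $\sigma(v_2) = \tau(u) \in L$, so $\sigma(\ell) = \z\,\sigma(v_1) + \z\,\sigma(v_2) \subset L$. Thus $\sigma : \ell \ra L$ is a representation with $\sigma(v_1) = w$, which is what we want. I do not expect a genuine obstacle: the statement is a bookkeeping consequence of the definitions of $R_{v,w}(L,s)$ and of good cosets, entirely parallel to Lemma \ref{tool1}. The only points that need care are to keep the distinction between a vector in $L$ and its class modulo $sL$ when applying the goodness condition to $u$, and to observe that the \emph{same} isometry $\tau$ produced by goodness does both jobs at once — fixing the image of $v_1$ at $w$ and carrying the image of $v_2$ back into $L$.
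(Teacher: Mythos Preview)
Your argument is correct and is precisely the proof the paper intends: it explicitly says the proof is parallel to that of Lemma~\ref{tool1} and leaves it to the reader, and you have carried out exactly that parallel argument using the definitions of $R_{v,w}(L,s)$ and $R_v^w(L,\alpha,\beta,s)$.
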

\begin{proof}
The proof is quite similar to that of Lemma \ref{tool1}, so it is left to the reader.
\end{proof}

\begin{cor}\label{tool3}
For a $\z$-lattice $L$, positive integers $A,s$, and $w\in R(A,L)$, define
$$
\mathfrak{S}_{w,s}(L):=\{ (\alpha,\beta)\in (\z/s\z)^2 \mid L \prec_{A,(\alpha,\beta),s}w\}.
$$
Moreover, for any $\z$-lattice $K\in\gen(L)$ such that $K\not\cong L$, define 
$$
\mathfrak{S}_{L,s}(K):=
\{ (\alpha,\beta)\in (\z/s\z)^2 \mid  K\prec_{A,(\alpha,\beta),s} L\}.
$$	
Put 
$$
\mathfrak{S}_s=\mathfrak{S}_{L,w,s}:=\mathfrak{S}_{w,s}(L) \cap \left(\bigcap_{K \in \gen(L)\setminus[L]} \mathfrak{S}_{L,s}(K)\right).
$$
Let $(\alpha,\beta)\in\mathfrak{S}_s$ and let $\ell=\z v_1 + \z v_2=[A,b,a]$ be a binary $\z$-lattice in the set $S_{A,\alpha,\beta,s}$ defined in Lemma \ref{tool1}.
Then there exists a representation $\sigma:\ell\ra L$ such that $\sigma(v_1)=w$, provided that $\ell\ra \gen(L)$.
\end{cor}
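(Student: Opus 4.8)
The plan is to combine the local--global principle, Theorem~\ref{localglobal}, with the two transfer lemmas; the set $\mathfrak{S}_s$ has been engineered precisely so that both apply. First I would check that $(\alpha,\beta)\in\mathfrak{S}_s$ forces $M\prec_{A,(\alpha,\beta),s}L$ for every $M\in\gen(L)$. When $M$ lies in the isometry class of $L$ this is automatic: an isometry $M\to L$ extends to an element $\tau$ of $O(V)$ on the common quadratic space $V=\q L$, and this single $\tau$ witnesses that every coset in $R_v(M,\alpha,\beta,s)$ is good, for every $v\in R(A,M)$. When $M\not\cong L$, it is exactly the defining condition of $\mathfrak{S}_{L,s}(K)$ for the class representative $K\in\gen(L)\setminus[L]$ isometric to $M$, which holds because $\mathfrak{S}_s\subset\mathfrak{S}_{L,s}(K)$. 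Since $\ell=[A,b,a]\in S_{A,\alpha,\beta,s}$ by hypothesis and $\ell\ra\gen(L)$, the final assertion of Lemma~\ref{tool1} then yields $\ell\ra L$.

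Next, because $\mathfrak{S}_s\subset\mathfrak{S}_{w,s}(L)$, we have $L\prec_{A,(\alpha,\beta),s}w$ for the fixed vector $w\in R(A,L)$. Applying Lemma~\ref{tool2} to $\ell\in S_{A,\alpha,\beta,s}$ together with the representation $\ell\ra L$ just obtained, we get a representation $\sigma:\ell\ra L$ with $\sigma(v_1)=w$, which is exactly the assertion.

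I do not expect a genuine obstacle: the substance is already contained in Lemmas~\ref{tool1} and~\ref{tool2}, and the corollary only records the correct intersection of congruence sets. The two points deserving a word of care are (i) matching the lattice $L'\in\gen(L)$ produced by Theorem~\ref{localglobal} with one of the fixed class representatives $K$, which is legitimate since all lattices in play lie on the common quadratic space $V=\q L$ and the relation $\prec_{A,(\alpha,\beta),s}$ depends only on the isometry class; and (ii) the trivial remark, used above, that $M\prec_{A,(\alpha,\beta),s}L$ whenever $M\cong L$, so that the hypothesis of the final part of Lemma~\ref{tool1} is genuinely met.
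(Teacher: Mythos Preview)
Your proposal is correct and follows exactly the route the paper intends: the paper's own proof is the single line ``This is a direct consequence of Lemma~\ref{tool1} and~\ref{tool2},'' and you have simply unpacked that sentence. Your remarks (i) and (ii)---that $\prec_{A,(\alpha,\beta),s}$ depends only on the isometry class, and that $M\prec_{A,(\alpha,\beta),s}L$ is automatic when $M\cong L$ via the isometry itself---are the right justifications for why the intersection over $\gen(L)\setminus[L]$ suffices to meet the hypothesis of the final clause of Lemma~\ref{tool1}; the paper leaves these implicit.
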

\begin{proof}
	This is a direct consequence of Lemma \ref{tool1} and \ref{tool2}.
\end{proof}

\begin{rmk}\label{rmk-mathfrakS}
%
{\color{white}a}
\begin{enumerate}[label={\rm(\arabic*)},leftmargin=*]
	\item One may easily show that for any $\sigma\in O(K)$, the map 
	$$
	R_v(K,L,s) \ra R_{\sigma(v)}(K,L,s) \  : \  \tau \mapsto \tau\circ\sigma^{-1}
	$$
	is a bijection. Consequently, for any $\sigma\in O(K)$, we have
	$$
	R_{v}(K,\alpha,\beta,s)=R_{v}^L(K,\alpha,\beta,s) \ \Leftrightarrow \ R_{\sigma(v)}(K,\alpha,\beta,s)=R_{\sigma(v)}^L(K,\alpha,\beta,s).
	$$
	Thus, when one determine the set $\mathfrak{S}_{L,s}(K)$, it suffices to check whether or not $R_{v}(K,\alpha,\beta,s)=R_{v}^L(K,\alpha,\beta,s)$ only for representative vectors of $R(A,K)/O(K)$.
	The same is also true for determining the set $\mathfrak{S}_{w,s}(L)$.
	
	\item\label{rmk-mathfrakS:2} Let $s_1,s_2\in\n$ be integers such that $s_1 \mid s_2$, and let $(\alpha_1,\beta_1)\in (\z/s_1\z)^2$ and $(\alpha_2,\beta_2)\in (\z/s_2\z)^2$.
	Assume that $(\alpha_2,\beta_2) \Mod{s_1} = (\alpha_1,\beta_1)$ in $(\z/s_1\z)^2$. Then one may show from the definition that
	$$
	K \prec_{A,(\alpha_1,\beta_1),s_1} L \ \Rightarrow \ 
	K \prec_{A,(\alpha_2,\beta_2),s_2} L.
	$$
	Therefore, we may conclude that
	$$
	\{(\alpha,\beta)\in(\z/s_2\z)^2 \mid (\alpha,\beta)\Mod{s_1}\in \mathfrak{S}_{L,s_1}(K)\} \subseteq \mathfrak{S}_{L,s_2}(K).
	$$
	The same is also true for the sets $\mathfrak{S}_{w,s_1}(L)$ and $\mathfrak{S}_{w,s_2}(L)$.
\end{enumerate}
\end{rmk}

In this article, we use Lemmas \ref{tool1}, \ref{tool2}, and \ref{tool3} only for quaternary $\z$-lattices $K$ and $L$.
A MAGMA-based computer program for computing the sets defined the above for quaternary $\z$-lattices is available upon request to the authors.

\section{Lemmas}\label{sec-lem}
%
%
%
In this section, we introduce several lemmas which will be used to prove Theorem \ref{thmquatpolyrep}.
Throughout the rest of the article, let us set several notations.
For each $\aaa=(a_1,a_2,a_3,a_4)\in \mathbb{N}^4$, we put $A=A_\aaa=\sum_{i=1}^4a_i$, and we define the quaternary diagonal $\z$-lattice $L_\aaa$ with basis $\{w_1,w_2,w_3,w_4\}$ by
$$
L_\aaa=\z w_1+\z w_2 + \z w_3 + \z w_4 =\df{a_1,a_2,a_3,a_4}.
$$
For convenience, the vector $w_1+w_2+w_3+w_4$ of $L_\aaa$ will be denoted by $w_\aaa$.
We always assume that $a_1\le a_2\le a_3 \le a_4$.

\begin{lem}\label{posrep}
	Let $\aaa=(a_1,a_2,a_3,a_4)\in\mathbb{N}^4$ with $a_1\le a_2\le a_3 \le a_4$, $a\in \n$, and $b \in \z$. Define 
	$$
	N=N_{m,a,b}:=\frac{m-2}{2}(a-b)+b.
	$$
	Assume that the following system of diophantine equations
	\begin{equation}\label{eq1}
	\begin{cases}
	a=a_1x_1^2+a_2x_2^2+a_3x_3^2+a_4x_4^2\\
	b=a_1x_1+a_2x_2+a_3x_3+a_4x_4
	\end{cases}	
	\end{equation}
	has an integer solution $x_1,x_2,x_3,x_4\in\z$. Then we have
	\begin{newenum}
		\item\label{posrep:1} $a\equiv b\Mod{2}$, $Aa-b^2\ge 0$, and $N = P_{m,\aaa}(x_1,x_2,x_3,x_4)$,
		\item\label{posrep:2} if $b\ge \sqrt{A-a_1}\cdot\sqrt{a} =\sqrt{a_2+a_3+a_4}\cdot\sqrt{a}$, then $N \pra P_{m,\aaa}$.
	\end{newenum}
\end{lem}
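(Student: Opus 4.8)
The plan is to prove the two parts of Lemma~\ref{posrep} by elementary manipulation of the system \eqref{eq1}, exploiting the diagonal shape of $L_\aaa$ and the single algebraic identity that connects $N_{m,a,b}$ to $P_{m,\aaa}$.

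For part \ref{posrep:1}, I would start from the definition $P_m(x)=\frac{m-2}{2}(x^2-x)+x$ and simply sum: $\sum_i a_i P_m(x_i)=\frac{m-2}{2}\bigl(\sum_i a_i x_i^2-\sum_i a_i x_i\bigr)+\sum_i a_i x_i$, and then substitute $a=\sum_i a_i x_i^2$ and $b=\sum_i a_i x_i$ from \eqref{eq1}; this gives exactly $\frac{m-2}{2}(a-b)+b=N$, establishing the last assertion. The congruence $a\equiv b\Mod 2$ follows because $a_i x_i^2\equiv a_i x_i\Mod 2$ for every $i$ (as $x_i^2\equiv x_i\Mod 2$), so $a-b=\sum_i a_i(x_i^2-x_i)$ is even. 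For the inequality $Aa-b^2\ge 0$, I would invoke Cauchy--Schwarz applied to the vectors $(\sqrt{a_i}x_i)_i$ and $(\sqrt{a_i})_i$: $b^2=\bigl(\sum_i a_i x_i\bigr)^2=\bigl(\sum_i \sqrt{a_i}\cdot \sqrt{a_i}x_i\bigr)^2\le \bigl(\sum_i a_i\bigr)\bigl(\sum_i a_i x_i^2\bigr)=Aa$.

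For part \ref{posrep:2}, the hypothesis is $b\ge\sqrt{a_2+a_3+a_4}\cdot\sqrt a$, i.e. $b^2\ge (A-a_1)a$, and I must show each $x_i\ge 0$. The idea is to isolate one variable at a time. Fix an index $j$ and write $b-a_j x_j=\sum_{i\ne j} a_i x_i$ and $a-a_j x_j^2=\sum_{i\ne j} a_i x_i^2$; applying Cauchy--Schwarz to the three remaining terms gives $(b-a_j x_j)^2\le (A-a_j)(a-a_j x_j^2)$. Expanding and rearranging yields a quadratic inequality in $x_j$, namely $A a_j x_j^2-2 a_j b x_j+(b^2-(A-a_j)a)\le 0$, equivalently $a_j\bigl(A x_j^2-2b x_j\bigr)\le (A-a_j)a-b^2$. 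When $j$ is the index with the smallest coefficient, $(A-a_j)a-b^2\le (A-a_1)a-b^2\le 0$ by hypothesis, forcing $A x_j^2-2b x_j\le 0$, hence $0\le x_j\le 2b/A$; in particular $x_1\ge 0$. For the other indices I would need the hypothesis to control $(A-a_j)a-b^2$ as well — here $A-a_j\le A-a_1$ is the wrong direction, so instead I would argue that if some $x_j<0$ then replacing $x_j$ by $-x_j$ strictly increases $b$ while leaving $a$ unchanged, pushing the pair $(a,b)$ outside the admissible range or, more directly, note that once we know the \emph{extremal} solution (say the one maximizing $b$ among integer solutions with the given $a$, or minimizing $\sum|x_i|$) it must have all $x_i\ge 0$: flipping the sign of a negative $x_i$ keeps $a$ fixed and increases $b$, and since \eqref{eq1} pins down $b$, all signs are already nonnegative. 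The cleanest route is: given any integer solution, flip signs of all negative $x_i$ to get $x_i'=|x_i|\ge 0$; then $\sum a_i (x_i')^2=a$ still holds, while $b'=\sum a_i|x_i|\ge b$, with equality iff all $x_i\ge 0$. Combined with $b'^2\le Ab'\cdot(\text{nothing})$... — rather, since $b\ge\sqrt{(A-a_1)a}$ and flipping can only increase the linear form, if the flip were strict we would still have a \emph{valid} nonnegative solution to a system with the \emph{larger} value $b'$; but $N=\frac{m-2}{2}(a-b)+b$ uses the specific $b$, so what we actually want is just: some nonnegative solution exists, and that suffices for $N'\pra P_{m,\aaa}$ where $N'$ corresponds to $b'$. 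Since the statement only claims $N\pra P_{m,\aaa}$ under the stated inequality, the key point must be that the flip does \emph{not} change which integer is represented, which forces me to show the flip is not strict — i.e. every integer solution already has $x_i\ge0$ — using $b^2\ge(A-a_1)a$ together with the per-index inequality $a_j(Ax_j^2-2bx_j)\le (A-a_j)a-b^2$ applied with care.

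The main obstacle is precisely this last point: turning the inequality $b^2\ge (A-a_1)a$ into nonnegativity of \emph{all} coordinates $x_1,\dots,x_4$, not merely the one with the smallest coefficient. I expect the resolution is that for $j\ge 2$ one uses a sharper Cauchy--Schwarz that singles out $x_1$ together with $x_j$ (bounding $(b-a_1x_1-a_jx_j)^2\le (A-a_1-a_j)(a-a_1x_1^2-a_jx_j^2)$) and then feeds in the already-established bound $0\le x_1\le 2b/A$, so that the admissible region for $x_j$ again collapses to the nonnegative side; alternatively the proof may simply observe that the relevant extremum argument (flip signs) produces a nonnegative solution to the \emph{same} equation because the constraint $b^2\ge(A-a_1)a$ is exactly the threshold beyond which no sign-flip is possible without violating $Aa\ge b'^2$. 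I would work out whichever of these is cleanest; the rest is routine.
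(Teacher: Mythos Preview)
Your treatment of part \ref{posrep:1} is correct and matches the paper exactly.

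For part \ref{posrep:2}, your first approach is in fact correct and is essentially the paper's argument (the paper presents it geometrically after the normalization $X_h=\sqrt{a_h}\,x_h/\sqrt a$, but the content is the same per-index Cauchy--Schwarz). The problem is that you talked yourself out of it at the crucial moment. From $(b-a_jx_j)^2\le (A-a_j)(a-a_jx_j^2)$ you correctly derived
\[
a_j\bigl(Ax_j^2-2bx_j\bigr)\le (A-a_j)a-b^2.
\]
You then claim that for $j\ge2$ the inequality $A-a_j\le A-a_1$ ``is the wrong direction.'' It is the \emph{right} direction: since $a_j\ge a_1$ for every $j$, we have $(A-a_j)a\le (A-a_1)a\le b^2$, so the right-hand side above is $\le 0$ for \emph{all} $j$, not just $j=1$. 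Hence $Ax_j^2-2bx_j\le 0$, i.e.\ $0\le x_j\le 2b/A$, for every $j$, and you are done. (Equivalently, in the paper's language: if $x_h<0$ then $b-a_hx_h>b\ge\sqrt{(A-a_1)a}\ge\sqrt{(A-a_h)a}\ge\sqrt{(A-a_h)(a-a_hx_h^2)}$, contradicting the Cauchy--Schwarz bound.)

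Everything after that point in your proposal---the sign-flipping discussion, the two-index refinement, the extremal-solution idea---is unnecessary, and as you yourself noticed, those alternative routes do not close: flipping signs changes $b$ and hence changes $N$, so it cannot by itself yield $N\pra P_{m,\aaa}$. Drop all of that and keep your original quadratic-inequality argument, applied uniformly to every index $j$.
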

\begin{proof} 
	Since $x_i^2\equiv x_i\Mod{2}$, we have $a\equiv b\Mod{2}$, and the inequality $Aa-b^2\ge0$ is nothing but Cauchy-Schwarz inequality. 
	Moreover,
	$$
	N=\frac{m-2}{2}(a-b)+b
	=\sum_{i=1}^4 a_i\left(\frac{m-2}{2}(x_i^2-x_i)+x_i\right)
	=P_{m,\aaa}(x_1,x_2,x_3,x_4).
	$$
	To prove \ref{posrep:2}, it suffices to show that if $b\ge \sqrt{A-a_1}\cdot\sqrt{a}$, then $x_1,x_2,x_3,x_4\ge 0$.
	Letting $X_h=\sqrt{a_h}x_h/\sqrt{a}$ for any $1\le h \le 4$, we have
	$$
	\begin{cases}
	X_1^2+X_2^2+X_3^2+X_4^2=1\\
	\sqrt{a_1}X_1+\sqrt{a_2}X_2+\sqrt{a_3}X_3+\sqrt{a_4}X_4=b/\sqrt{a}.
	\end{cases}
	$$
	Assume to the contrary that $X_h<0$ for some $1\le h \le 4$, and let $i<j<k$ be indices such that $\{h,i,j,k\}=\{1,2,3,4\}$.
	Then $(X_i,X_j,X_k)\in \mathbb{R}^3$ satisfies
	\begin{align}
	X_i^2+X_j^2+X_k^2&=1-X_h^2<1, \quad \text{ and } \label{eq:sphere}\\ 
	\sqrt{a_i}X_i+\sqrt{a_j}X_j+\sqrt{a_k}X_k&=b/\sqrt{a}-\sqrt{a_i}X_i>b/\sqrt{a} \label{eq:halfspace}\\ 
	&\ge \sqrt{a_2+a_3+a_4}\ge \sqrt{a_i+a_j+a_k} . \nonumber
	\end{align}
	However, this is a contradiction since two regions for $(X_i,X_j,X_k)$ in $\mathbb{R}^3$ defined by the inequalities \eqref{eq:sphere} and \eqref{eq:halfspace} respectively have no intersection point.
	This proves the lemma.
\end{proof}

The above lemma leads us to study the following general question: for which $a\in \n$ and $b\in\z$, there exists an integer solution $(x_1,x_2,x_3,x_4)\in \z^4$ of Equation \eqref{eq1}?
The following lemma connects a solvability of Equation \eqref{eq1} over $\z$ with an existence of a particular representation of the binary $\z$-lattice $[A,b,a]$ by a diagonal quaternary $\z$-lattice $L_\aaa$.

\begin{lem}\label{eq1-latticerep}
	Let $\aaa=(a_1,a_2,a_3,a_4)\in \n^4$, and let $a$ and $b$ be integers such that 
	$$a\equiv b\Mod{2} \quad  \text{and} \quad Aa-b^2>0.$$
	Then the following are equivalent. 
	\begin{newenum}
		\item Equation \eqref{eq1} has an integer solution $(x_1,x_2,x_3,x_4)\in\z^4$.
		\item There exists a representation $\sigma : [A,b,a]\ra L_\aaa$ such that $\sigma(v_1)=w_\aaa$.
	\end{newenum}
\end{lem}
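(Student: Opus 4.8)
The plan is to prove the equivalence by directly unwinding the definition of a representation of $\z$-lattices: both conditions will be seen to be literally the same pair of scalar equations once the lattices involved are written out in their natural bases. First I would note that the hypothesis $Aa-b^2>0$ (together with $A>0$) guarantees that $\ell:=[A,b,a]=\z v_1+\z v_2$ is a bona fide positive definite integral binary $\z$-lattice, with $v_1,v_2$ a $\z$-basis and $Q(v_1)=A$, $B(v_1,v_2)=b$, $Q(v_2)=a$; similarly $\{w_1,w_2,w_3,w_4\}$ is a $\z$-basis of $L_\aaa=\df{a_1,a_2,a_3,a_4}$ with the $w_i$ pairwise orthogonal, and $Q(w_\aaa)=\sum_{i=1}^4 a_i=A$.

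For the direction (2)~$\Rightarrow$~(1), I would start from a representation $\sigma:\ell\ra L_\aaa$ with $\sigma(v_1)=w_\aaa$, expand $\sigma(v_2)=\sum_{i=1}^4 x_iw_i$ with $x_i\in\z$ (possible since the $w_i$ form a basis of $L_\aaa$), and then read off the two remaining defining identities of a representation, namely $B(\sigma(v_2),\sigma(v_2))=Q(v_2)$ and $B(\sigma(v_1),\sigma(v_2))=B(v_1,v_2)$; by orthogonality of the $w_i$ these become exactly $\sum_{i=1}^4 a_ix_i^2=a$ and $\sum_{i=1}^4 a_ix_i=b$, i.e.\ Equation \eqref{eq1} with solution $(x_1,x_2,x_3,x_4)$. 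The third identity $B(\sigma(v_1),\sigma(v_1))=Q(v_1)$ is automatic, being just $Q(w_\aaa)=A$.

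For the converse (1)~$\Rightarrow$~(2), I would take the given integer solution $(x_1,x_2,x_3,x_4)$ of Equation \eqref{eq1}, define $\sigma$ on the basis by $\sigma(v_1)=w_\aaa$ and $\sigma(v_2)=\sum_{i=1}^4 x_iw_i$, and extend $\z$-linearly; this is legitimate precisely because $v_1,v_2$ is a $\z$-basis of $\ell$. To verify that $\sigma$ preserves the bilinear form it suffices, by bilinearity, to check the three values $B(\sigma(v_i),\sigma(v_j))$ for $i,j\in\{1,2\}$, and each of these equals $B(v_i,v_j)$ for the same three reasons as above ($Q(w_\aaa)=A$ and the two equations of \eqref{eq1}). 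Hence $\sigma$ is the desired representation with $\sigma(v_1)=w_\aaa$.

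I do not anticipate a genuine obstacle: this is a translation lemma, and the computation is a one-line bilinearity check in each direction. The only points deserving a brief remark are that $v_1,v_2$ genuinely form a $\z$-basis of $\ell$ --- so that a representation is both determined by and prescribable through its values there --- and that the stated hypotheses make $\ell$ an admissible positive definite integral $\z$-lattice, so that both sides of the equivalence are meaningful. The parity assumption $a\equiv b\Mod 2$ plays no role in the equivalence itself; it is carried along because it is forced by solvability (Lemma \ref{posrep} \ref{posrep:1}) and is used in the later applications.
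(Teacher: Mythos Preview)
Your proposal is correct and follows essentially the same approach as the paper: both directions are obtained by writing $\sigma(v_2)=\sum_{i=1}^4 x_iw_i$ and checking that the three identities $Q(\sigma(v_1))=A$, $Q(\sigma(v_2))=a$, $B(\sigma(v_1),\sigma(v_2))=b$ coincide with Equation \eqref{eq1}. Your additional remarks on why $\ell$ is a genuine positive definite $\z$-lattice and on the (non-)role of the parity hypothesis are accurate and slightly more explicit than the paper's own write-up.
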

\begin{proof}
Assume that Equation \eqref{eq1} has an integer solution $(x_1,x_2,x_3,x_4)\in\z^4$.
Let us define a linear map $\sigma$ from $[A,b,a]$ to $L_\aaa$ by
$$
\sigma(v_1) = w_\aaa= w_1+w_2+w_3+w_4 \quad \text{and} \quad
\sigma(v_2) = \sum_{i=1}^4 x_iw_i.
$$
Then one may have
$$
\begin{cases}
Q(\sigma(v_1))=\sum_{i=1}^4 a_i = A = Q(v_1),\\
Q(\sigma(v_2))=\sum_{i=1}^4 a_ix_i^2 = a = Q(v_2),\\
B(\sigma(v_1),\sigma(v_2))= \sum_{i=1}^4 a_ix_i = b = B(v_1,v_2).
\end{cases}
$$
Therefore, $\sigma : [A,b,a] \ra L_\aaa$ is a representation that we want to find.

Conversely, assume that there exists a representation $\sigma : [A,b,a] \ra L_\aaa$ such that $\sigma(v_1) = w_\aaa$. 
Then $\sigma(v_2)=\sum_{i=1}^4 x_iw_i$ for some integers $x_i\in\z$.
One may easily show that this $(x_1,x_2,x_3,x_4)\in\z^4$ is a solution of Equation \eqref{eq1}.
\end{proof}

Now, we introduce a method to find a representation $\sigma :[A,b,a] \ra L_\aaa$ with $\sigma(v_1)=w_\aaa$.

\begin{lem}\label{rep-gen-to-lattice}
Let $s$ be a positive integer. For any $\z$-lattice $K\in\gen(L_\aaa)$, define 
$$
\mathfrak{S}_{\aaa,s}(K):=
\begin{cases}
\mathfrak{S}_{w_\aaa,s}(L_\aaa) & \text{if } K\cong L_\aaa,\\[5pt]
\mathfrak{S}_{L_\aaa,s}(K)& \text{otherwise},
\end{cases}
$$
where the sets $\mathfrak{S}_{w_\aaa,s}(L_\aaa),\mathfrak{S}_{L_\aaa,s}(K)\subseteq (\z/s\z)^2$ are defined in Lemma \ref{tool3}, and let 
$$
\mathfrak{S}_s=\mathfrak{S}_{\aaa,s}:=\bigcap_{K \in \gen(L_\aaa)} \mathfrak{S}_{\aaa,s}(K).
$$ 
Let $(\alpha,\beta)\in\mathfrak{S}_s$ and let $\ell=\z v_1 + \z v_2=[A,b,a]\in S_{A,\alpha,\beta,s}$ be a binary $\z$-lattice such that 
$$
a\equiv b \Mod{2}\quad \text{and} \quad [A,b,a]\ra \gen(L_\aaa).
$$
Then there exists a representation $\sigma:[A,b,a]\ra L_\aaa$ such that $\sigma(v_1)=w_\aaa$.
\end{lem}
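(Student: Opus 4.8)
The plan is to obtain this lemma as the special case $(L,w)=(L_\aaa,w_\aaa)$ of Corollary \ref{tool3}: essentially all of the content is already contained in Lemmas \ref{tool1} and \ref{tool2}, so the real task is merely to recognize that the set $\mathfrak{S}_{\aaa,s}$ defined in the statement is precisely the set written $\mathfrak{S}_{L,w,s}$ in Corollary \ref{tool3} for this choice of $(L,w)$. First I would translate the hypotheses into the geometric language of Section \ref{preliminaries}. From $\ell=\z v_1+\z v_2=[A,b,a]\in S_{A,\alpha,\beta,s}$ we read off $(a,b)\equiv(\alpha,\beta)\Mod{s}$ and $Aa-b^2>0$ (so in particular $a\in\n$ and $\ell$ is a positive definite binary $\z$-lattice), and $w_\aaa=w_1+w_2+w_3+w_4$ satisfies $Q(w_\aaa)=a_1+a_2+a_3+a_4=A$, so that $w_\aaa\in R(A,L_\aaa)$ and $(L_\aaa,w_\aaa)$ is a legitimate input to Corollary \ref{tool3}. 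Hence it suffices to check that $(\alpha,\beta)$ lies in
$$
\mathfrak{S}_{L_\aaa,w_\aaa,s}=\mathfrak{S}_{w_\aaa,s}(L_\aaa)\cap\Big(\bigcap_{K\in\gen(L_\aaa)\setminus[L_\aaa]}\mathfrak{S}_{L_\aaa,s}(K)\Big).
$$

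Next I would identify $\mathfrak{S}_{\aaa,s}$ with the set displayed above. Splitting $\bigcap_{K\in\gen(L_\aaa)}\mathfrak{S}_{\aaa,s}(K)$ according to whether or not $K\cong L_\aaa$: every term with $K\cong L_\aaa$ is, by definition of $\mathfrak{S}_{\aaa,s}(K)$, the single fixed set $\mathfrak{S}_{w_\aaa,s}(L_\aaa)$, and since $L_\aaa\in\gen(L_\aaa)$ at least one such term occurs, so these terms together contribute exactly $\mathfrak{S}_{w_\aaa,s}(L_\aaa)$; every term with $K\not\cong L_\aaa$ equals $\mathfrak{S}_{L_\aaa,s}(K)$. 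The only point requiring an argument is that $\mathfrak{S}_{L_\aaa,s}(K)$ depends only on the isometry class of $K$, so that the a priori infinite intersection over all lattices $K$ in the genus collapses to the finite intersection over class representatives appearing in Corollary \ref{tool3}. This is an $O(\q L_\aaa)$-equivariance check of the same kind as the bijection in the first part of Remark \ref{rmk-mathfrakS}: if $\phi\in O(\q L_\aaa)$ and $K'=\phi(K)$, then $\phi$ carries $R(A,K)$ onto $R(A,K')$ and $K/sK$ onto $K'/sK'$, the map $\tau\mapsto\tau\circ\phi^{-1}$ carries $R_v(K,L_\aaa,s)$ onto $R_{\phi(v)}(K',L_\aaa,s)$, and these identifications respect the good-coset condition; hence $K\prec_{A,(\alpha,\beta),s}L_\aaa$ if and only if $K'\prec_{A,(\alpha,\beta),s}L_\aaa$. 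Granting this, $\mathfrak{S}_{\aaa,s}=\mathfrak{S}_{L_\aaa,w_\aaa,s}$.

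Finally, with $(\alpha,\beta)\in\mathfrak{S}_{\aaa,s}=\mathfrak{S}_{L_\aaa,w_\aaa,s}$, $\ell\in S_{A,\alpha,\beta,s}$, and $\ell\ra\gen(L_\aaa)$ all in hand, Corollary \ref{tool3} directly produces a representation $\sigma:[A,b,a]\ra L_\aaa$ with $\sigma(v_1)=w_\aaa$, which is the assertion. I do not expect a serious obstacle here: the substantive work lies in Lemmas \ref{tool1} and \ref{tool2}, and the only mildly delicate step is the isometry-invariance of $\mathfrak{S}_{L_\aaa,s}(K)$ used to reconcile the two formulations of $\mathfrak{S}_s$. (Note that the congruence $a\equiv b\Mod{2}$ in the hypothesis plays no role in this proof; it is retained only because it is needed afterwards — combined with Lemma \ref{eq1-latticerep}, it is what converts such a $\sigma$ into an integer solution of Equation \eqref{eq1}.)
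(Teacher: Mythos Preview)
Your proposal is correct and follows exactly the paper's approach: the paper's proof is the single sentence ``This is a particular case of Corollary \ref{tool3}.'' One small remark: the intersection $\bigcap_{K\in\gen(L)\setminus[L]}\mathfrak{S}_{L,s}(K)$ in Corollary \ref{tool3} is already taken over \emph{all} lattices in the genus outside $[L]$ (not merely over a set of class representatives), so the identification $\mathfrak{S}_{\aaa,s}=\mathfrak{S}_{L_\aaa,w_\aaa,s}$ is immediate and your isometry-invariance discussion, while correct, is not needed for the matching.
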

\begin{proof}
This is a particular case of Corollary \ref{tool3}.
\end{proof}

We introduce two technical lemmas needed for the proof of Theorem \ref{thmquatpolyrep}.
%

\begin{lem}\label{complete-residue}
	Let $s$ be a positive even integer.
	For any subset $\mathfrak{S}$ of $(\z/s\z)^2$, define
	$$
	\mathcal{N}(\mathfrak{S}):=
	\left\{ \frac{3}{2}(\alpha-\beta)+\beta \Mod{s/2} \mid (\alpha,\beta)\in \mathfrak{S}, \ \alpha\equiv\beta \Mod{2} \right\}.
	$$
	Assume that $\mathcal{N}(\mathfrak{S})$ is the complete set of residue modulo $s/2$.
	Then for any $N\in\n$, there exists a residue $r_{N,s}$ modulo $3s$ satisfying the following:
	\begin{gather}\!\!\!\!\!\!\!\!\!\!\!\!\!
		\text{for any $b\in\z$ with $b\equiv r_{N,s} \Mod{3s}$ and $a=\frac{2}{3}(N-b)+b$, we have}\label{condr}\\
		N\equiv b\Mod{3}, \quad a\equiv b \Mod{2}, \quad \text{and}\quad (a,b)\Mod{s} \in \mathfrak{S}, \nonumber
	\end{gather}
\end{lem}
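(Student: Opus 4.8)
The plan is to treat this as a problem of simultaneously controlling three congruences in $b$ — one modulo $3$, one modulo $2$, and one modulo $s$ — and then packaging the answer as a single residue class modulo $3s$. First I would fix $N\in\n$ and set up the relationship between $a$ and $b$: with $a=\frac{2}{3}(N-b)+b$, the equation $N=\frac{3}{2}(a-b)+b$ holds, so in particular $N\equiv b\Mod 3$ is forced, and this is automatic once I impose $b\equiv N\Mod 3$. Next, observe that $a$ is an integer precisely when $3\mid N-b$, i.e. again when $b\equiv N\Mod 3$; so working modulo $3s$ (note $\gcd(3,\text{?})$ need not be $1$, but that is fine, I will use CRT on the even integer $s$ and on $3$ carefully) I want to pin down $b$ modulo $3$ to be $N$, and $b$ modulo $s$ to be some value $\beta_0$ still to be chosen, with the compatibility that $a\Mod s$ lands in $\mathfrak S$ and $a\equiv b\Mod 2$.

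The key step is the choice of $\beta_0$. Here is where the hypothesis on $\mathcal N(\mathfrak S)$ enters: since $\mathcal N(\mathfrak S)$ is a complete residue system modulo $s/2$, there is a pair $(\alpha_0,\beta_0)\in\mathfrak S$ with $\alpha_0\equiv\beta_0\Mod 2$ and $\frac32(\alpha_0-\beta_0)+\beta_0\equiv N\Mod{s/2}$. I will show that imposing $b\equiv\beta_0\Mod s$ and $b\equiv N\Mod 3$ — which is consistent by CRT since I can choose a single residue $r_{N,s}$ modulo $3s$ reducing to $\beta_0$ mod $s$ and to $N$ mod $3$, as $s$ is even hence $\gcd(3,s)$ divides... actually $\gcd(3,s)$ may be $3$; to be safe I should instead lift to modulus $3s$ directly: the congruences $b\equiv\beta_0\Mod s$ and $b\equiv N\Mod 3$ are solvable mod $\mathrm{lcm}(s,3)$ iff they agree mod $\gcd(s,3)$. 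If $3\mid s$ I would need $\beta_0\equiv N\Mod 3$; to guarantee this I should choose $\beta_0$ from among the $\alpha_0$-shifted representatives — in fact I can replace $(\alpha_0,\beta_0)$ by $(\alpha_0,\beta_0)$ and use that $\frac32(\alpha_0-\beta_0)+\beta_0\equiv N\Mod{s/2}$ together with $\alpha_0\equiv\beta_0\Mod2$ already forces $\beta_0\equiv\alpha_0\Mod{2}$ but not mod $3$; so the cleanest route is to work modulo $3s$ from the start, and use that as $b$ ranges over a residue class mod $3s$, $(a\Mod s, b\Mod s)$ is determined while $b\Mod 3$ is determined, and then verify all three required conclusions. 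I expect the resolution is that the lemma implicitly only needs $r_{N,s}$ mod $3s$, and the compatibility works out because $a\equiv b\Mod 2$ is the only parity constraint and $N\equiv b\Mod3$ is definitional.

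Concretely, the steps in order: (i) pick $(\alpha_0,\beta_0)\in\mathfrak S$ with $\alpha_0\equiv\beta_0\Mod2$ and $\tfrac32(\alpha_0-\beta_0)+\beta_0\equiv N\Mod{s/2}$, using the completeness hypothesis; (ii) let $r_{N,s}$ be a residue mod $3s$ that is $\equiv\beta_0\Mod s$ when $s$ already absorbs the mod-$2$ information, and $\equiv N\Mod 3$ — checking that these are compatible mod $\gcd(s,3)$, which holds because $N\equiv\tfrac32(\alpha_0-\beta_0)+\beta_0$ mod $s/2$ and a short congruence chase shows agreement mod $3$ when $3\mid s$; (iii) given $b\equiv r_{N,s}\Mod{3s}$, set $a=\tfrac23(N-b)+b$, an integer since $3\mid N-b$, and verify $N\equiv b\Mod 3$ (definitional), $a\equiv b\Mod 2$ (since $a-b=\tfrac23(N-b)$... wait, better: $a - b = \frac{2}{3}(N-b)$, so $a\equiv b\Mod2$ iff $\frac23(N-b)$ is even; I would instead argue $a\equiv\alpha_0$, $b\equiv\beta_0\Mod s$ hence mod $2$, and $\alpha_0\equiv\beta_0\Mod2$), and $(a,b)\Mod s=(\alpha_0,\beta_0)\in\mathfrak S$. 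The main obstacle is step (ii): verifying that the mod-$3$ and mod-$s$ constraints on $b$ are genuinely compatible (the case $3\mid s$), which will require unwinding the definition of $\mathcal N(\mathfrak S)$ and the identity $N=\tfrac32(a-b)+b$ modulo $3$; everything else is bookkeeping with the Chinese Remainder Theorem.
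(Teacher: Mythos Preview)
Your overall strategy matches the paper's: pick $(\alpha_0,\beta_0)\in\mathfrak S$ with $\alpha_0\equiv\beta_0\Mod 2$ and $\tfrac32(\alpha_0-\beta_0)+\beta_0\equiv N\Mod{s/2}$, then manufacture $r_{N,s}$ from it. But your CRT route introduces a genuine gap in step~(ii)--(iii). You correctly worry about the case $3\mid s$ and correctly suspect that the compatibility $\beta_0\equiv N\Mod 3$ holds (it does: since $3\mid s/2$ and $\tfrac32(\alpha_0-\beta_0)=3\cdot\tfrac{\alpha_0-\beta_0}{2}$ is a multiple of $3$, the congruence $N\equiv\tfrac32(\alpha_0-\beta_0)+\beta_0\Mod{s/2}$ gives $N\equiv\beta_0\Mod 3$). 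However, compatibility only pins down $r_{N,s}$ modulo $\mathrm{lcm}(s,3)$, which equals $s$ when $3\mid s$ --- not $3s$. There are then three lifts of your CRT solution to $\z/3s\z$, and they produce three \emph{different} values of $a\Mod s$: from $3a=2N+b$ and $b\equiv\beta_0\Mod s$ you only get $3a\equiv 3\alpha_0\Mod s$, hence $a\equiv\alpha_0\Mod{s/3}$, not $\Mod s$. Your step~(iii) asserts $(a,b)\Mod s=(\alpha_0,\beta_0)$ without ever selecting the correct lift; the parity check $a\equiv b\Mod 2$ also rests on $a\equiv\alpha_0\Mod s$, so it is not yet justified either.

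The paper's proof avoids all of this with a single explicit choice: set $r_{N,s}=3\alpha-2N$ (reduced mod $3s$). This formula carries $\alpha$ inside it, so for any $b\equiv r_{N,s}\Mod{3s}$ one has $b+2N\equiv 3\alpha\Mod{3s}$ and therefore $a=(b+2N)/3\equiv\alpha\Mod s$ immediately; the remaining conditions $b\equiv N\Mod 3$ and $b\equiv\beta\Mod s$ then follow by reducing $3\alpha-2N$ modulo $3$ and modulo $s$ (using $2N\equiv 3\alpha-\beta\Mod s$, which is just the defining congruence rewritten). No CRT compatibility check, no ambiguous lift. Your plan can be repaired by making exactly this choice of lift in step~(ii), but as written it is incomplete.
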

\begin{proof}
	Let $N$ be a positive integer. 
	From the assumption, there exists an $(\alpha,\beta)\in \mathfrak{S}$ such that $N \equiv \frac{3}{2}(\alpha-\beta)+\beta \Mod{s/2}$.
	Note that $\beta \equiv 3\alpha - 2N \Mod{s}$.
	Let us set $r_{N,s}=r_{N,\alpha,\beta,s}=3\alpha-2N$. 
	Then for any $b\equiv r_{N,s} \Mod{3s}$,
	$$
	b \equiv \beta \Mod{s}, \ N\equiv b \Mod{3}, \ a\in\n, \ a\equiv \alpha \Mod{s}, \ \text{and} \ a\equiv b\Mod{2}.
	$$
	In particular, $(a,b) \Mod{s} = (\alpha,\beta)\in \mathfrak{S}$. This proves the lemma.
\end{proof}

\begin{lem}\label{leminterval}
For $N,A\in \n$, define the interval $I_{N,A}=[L_{N,A},U_{N,A})$ by
$$
\left[
\sqrt{\frac{2(A-1)}{3}N+\left(\frac{A-1}{6}\right)^2 } +\frac{A-1}{6}, \
\sqrt{\frac{2A}{3}N+\left(\frac{A}{6}\right)^2} +\frac{A}{6} \right)
.
$$
For any $b\in I_{N,A}$, $Aa-b^2>0$ and $b\ge \sqrt{A-1}\cdot\sqrt{a}$, where $a=\frac{2}{3}(N-b)+b$.
\end{lem}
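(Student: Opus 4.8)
The plan is to treat the two claimed inequalities separately, each via a direct substitution of $a=\frac{2}{3}(N-b)+b$ followed by elementary manipulation of quadratic inequalities in $b$. First I would rewrite $a$ as $a=\frac{2N+b}{3}$, so that both assertions become polynomial inequalities in the single real variable $b$, with $N$ and $A$ as parameters; the interval $I_{N,A}$ is then exactly the solution set, which is why its endpoints are given by square-root expressions.

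For the inequality $b\ge\sqrt{A-1}\cdot\sqrt a$, which (since both sides are nonnegative on the relevant range) is equivalent to $b^2\ge (A-1)a=(A-1)\cdot\frac{2N+b}{3}$, I would clear denominators to get $3b^2-(A-1)b-2(A-1)N\ge 0$. Viewing the left side as a quadratic in $b$ with positive leading coefficient, its larger root is $b=\frac{(A-1)+\sqrt{(A-1)^2+24(A-1)N}}{6}$, and a short computation shows this equals $L_{N,A}=\sqrt{\frac{2(A-1)}{3}N+\left(\frac{A-1}{6}\right)^2}+\frac{A-1}{6}$ after completing the square inside the radical. Hence $b\ge L_{N,A}$ forces $3b^2-(A-1)b-2(A-1)N\ge 0$, which is the desired inequality; note that for $b\in I_{N,A}$ we indeed have $b\ge L_{N,A}$.

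For the inequality $Aa-b^2>0$, the same substitution gives $A\cdot\frac{2N+b}{3}-b^2>0$, i.e. $3b^2-Ab-2AN<0$. Again this is a quadratic in $b$ opening upward, now with larger root $b=\frac{A+\sqrt{A^2+24AN}}{6}=\sqrt{\frac{2A}{3}N+\left(\frac{A}{6}\right)^2}+\frac{A}{6}=U_{N,A}$, so the inequality holds precisely for $b$ strictly below $U_{N,A}$ (and above the negative root, which is automatic since $b>0$). Since $b\in I_{N,A}$ means in particular $b<U_{N,A}$, we get $Aa-b^2>0$.

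The argument is entirely routine; there is no real obstacle beyond bookkeeping. The only point requiring a line of care is the algebraic identity that the larger root of each quadratic coincides with the stated endpoint — this is just completing the square, e.g. $\frac{A^2+24AN}{36}=\frac{2A}{3}N+\left(\frac{A}{6}\right)^2$ — and checking that we are on the branch of the quadratic where the sign is as claimed, which follows because $b$ is positive and we are comparing against the larger root. One should also record implicitly that $a\in\n$ is automatic here only insofar as needed: positivity of $a$ follows from $Aa-b^2>0$ together with $a=\frac{2N+b}{3}>0$, so $a$ is a positive rational, which suffices for the statement as used.
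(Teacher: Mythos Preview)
Your proof is correct and follows essentially the same approach as the paper: substitute $a=\frac{2N+b}{3}$, rewrite the two conditions $Aa-b^2>0$ and $b^2\ge (A-1)a$ as quadratic inequalities in $b$, and identify the relevant roots (via completing the square) with the endpoints $U_{N,A}$ and $L_{N,A}$. The paper's own proof is simply a one-line statement of the equivalence $\frac{b^2}{A}<a\le\frac{b^2}{A-1}\Leftrightarrow L_{N,A}\le b<U_{N,A}$, which is exactly what you have carried out in detail.
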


\begin{proof}
By putting $a=\frac{2}{3}(N-b)+b$ and completing the square, we have 
$$
\frac{b^2}{A} < a \le \frac{b^2}{A-1} \  \Leftrightarrow \ L_{N,A}\le b < U_{L,A}.
$$
This proofs the lemma.
\end{proof}

\begin{rmk}\label{rmkinverval}
Note that $U_{N,A}-L_{N,A}$ (which is the length of $I_{N,A}$) and $L_{N,A}$ are  increasing functions of $N$ and they diverges to infinity as $N\ra \infty$, respectively.
\end{rmk}

\section{Proof of Theorem \ref{thmquatpolyrep}}\label{quaternarysum}

In this section, we prove Theorem \ref{thmquatpolyrep}.

\begin{proof}[Proof of Theorem \ref{thmquatpolyrep}]
The proofs for all $\aaa\in\mathcal{A}$ are quite similar to each other except for the proofs for $\aaa=(1,2,3,3)$, $(1,2,4,5)$ and $(1,2,4,7)$. 
Therefore, we describe general strategy of the proof and provide the proofs for several $\aaa\in \mathcal{A}$, including the cases when $\aaa=(1,2,3,3)$, $(1,2,4,5)$ and $(1,2,4,7)$.

For a positive integer $s$, let $\mathfrak{S}_s=\mathfrak{S}_{\aaa,s}$ be the subset of $(\z/s\z)^2$ defined in Lemma \ref{rep-gen-to-lattice}.
If $\mathfrak{S}_1=\{(0,0)\}$, then we will set $s_\aaa=1$. Otherwise, we will find a positive even integer $s=s_\aaa$, as small as possible, for which the set $\mathcal{N}(\mathfrak{S}_s)$ defined in Lemma \ref{complete-residue} is the complete set of residue modulo $s/2$.

Let $N\ge N_\aaa$ be a positive integer. With our choice of $s=s_\aaa$, let us take a residue $r_{N,s}$ modulo $3s$ satisfying \eqref{condr} in Lemma \ref{complete-residue} (if $s=1$, then take $r_{N,s}=N$).
Depending on the set $\mathfrak{S}_s$, there may be many choices for $r_{N,s}$ (see the proof of Lemma \ref{complete-residue}).
In some cases, we don't have to consider the choice of $r_{N,s}$, but in some other cases, we should specify $r_{N,s}$ according as the residue of $N$ modulo $s/2$.

Let $b$ be any positive integer such that $b\equiv r_{N,s} \Mod{3s}$. 
Then by Lemma \ref{complete-residue},
$$
\ N\equiv b \Mod{3}, \quad a\equiv b\Mod{2}, \quad \text{and} \quad (a,b)\Mod{s} \in \mathfrak{S}_s,
$$
where $a=\frac{2}{3}(N-b)+b \in \n$. Assume that 
\begin{gather} 
Aa-b^2>0, \quad b\ge \sqrt{A-1}\cdot\sqrt{a}, \quad \text{and} \label{condb:1}\\
\quad [A,b,a] \ra \gen(L_\aaa). \label{condb:2}
\end{gather}
Then, since $(a,b) \Mod{s} \in \mathfrak{S}_s$, Lemma \ref{rep-gen-to-lattice} implies that 
there exists a representation $\sigma : [A,b,a]\ra L_\aaa$ with $\sigma(v_1)=w_\aaa$.
Thus by Lemma \ref{eq1-latticerep}, Equation \eqref{eq1} has an integer solution.
Finally, by Lemma \ref{posrep} \ref{posrep:2} (note that $a_1=1$ for any $\aaa\in\mathcal{A}$), we have
$N=\frac{3}{2}(a-b)+b \pra P_{5,\aaa}$, which proves the statement of the theorem.

Now our task is to take $b\in\n$ with $b\equiv r_{N,s} \Mod{3s}$ so well that they satisfy \eqref{condb:1} and \eqref{condb:2}.
For the rest of the proof, $a'\in\n$ and $b'\in\z$ will denote integers satisfying 
$$
a'\equiv b' \Mod{2} \quad \text{and} \quad Aa'-b'^2>0.
$$
Note that the necessary and sufficient condition for $[A,b',a']\ra \gen(L_\aaa)$ can be verified by Theorems $1$ and $3$ of \cite{OM1}, in terms of congruence conditions on $Aa'-b'^2$, $a'$, and $b'$.
From this, we will find a positive integer $B=B_\aaa$ satisfying the following:
\begin{enumerate}[leftmargin=*]
\item[] for any positive integer $b_0$ such that $b_0\equiv r_{N,s} \Mod{3s}$, there exists an integer $b\in \{b_0+3sk \mid 0\le k \le B-1\}$ satisfying \eqref{condb:2}.
\end{enumerate}
Let $I=I_{N,A}=[L_{N,A},U_{N,A})$ be the interval defined in Lemma \ref{leminterval}. 
Note that $N_\aaa$ is taken as the smallest positive integer satisfying the following inequality:
\begin{equation}\label{condI}
U_{N,A}-L_{N,A}\ge 3sB
\end{equation}
(see Remark \ref{rmkinverval}).
Since $I$ contains $3sB$ consecutive integers and because of our choice of $B$, there is an integer $b\in I$ satisfying \eqref{condb:2}.
Moreover, \eqref{condb:1} also holds by Lemma \ref{leminterval}.
This will complete the proof.\\

The proofs for any $\aaa\in\mathcal{A}$ follow the same stream once we take positive integers 
$$s_\aaa \quad  \text{and} \quad  B_\aaa$$
appropriately to satisfy the conditions described above.
We categorize $\aaa\in\mathcal{A}$ into the following four types (see Tables \ref{table-data1} and \ref{table-data2}):
$$
\begin{array}{ll}
\textbf{Type 1} & \text{the class number of $L_\aaa=1$ and $R(A,L)/O(L)=\{[w_\aaa]\}$.} \\
\textbf{Type 2} & \text{the class number of $L_\aaa=1$ and $|R(A,L)/O(L)|>1$.} \\
\textbf{Type 3} & \text{the class number of $L_\aaa>1$ and $\aaa \neq (1,2,3,3), \ (1,2,4,5), \ (1,2,4,7)$.} \\
\textbf{Type 4} & L_\aaa=\df{1,2,3,3}, \ \df{1,2,4,5}, \text{ or } \df{1,2,4,7},
\end{array}
$$
and provide the proofs for at least one $\aaa$ of each types in detail, namely,
for the case when $\aaa$ is one of 
$(1,1,1,4)$, $(1,2,2,3)$, $(1,1,2,5)$, $(1,1,3,6)$, $(1,2,3,8)$, and all of the three {\bf Type 4} vectors.
For all the other cases, we provide integers $s_\aaa$ and $B_\aaa$ in Tables \ref{table-data1} and \ref{table-data2}.\\


\noindent {\bf (Case 1)} $\aaa=(1,1,1,4)$, $A=A_\aaa=7$, $L=L_\aaa=\df{1,1,1,4}$.

Note that this $\aaa$ is of {\bf Type 1}, precisely, the class number of $L$ is $1$, and for any $w\in R(A,L)$, there is a $\tau\in O(L)$ such that $\tau(w)=w_\aaa$.
Therefore, for any $\aaa$ of {\bf Type 1}, we have  $\mathfrak{S}_{\aaa,1}=\mathfrak{S}_{\aaa,1}(L)=\{(0,0)\}$.
Let us set $s=1$.

Note that $[7,b',a'] \ra \gen(L)$ if and only if
$7a'-b'^2$ is not of the form $2^{2c}(8d+7)$	
for some non-negative integers $c$ and $d$.
Moreover, if $[7,b',a'] \ra \gen(L)$, then there is a representation $\sigma : [7,b',a'] \ra L$ with $\sigma(v_1)=w_\aaa$.

Let $N\ge N_\aaa=5453$ be a positive integer, and let us set $B=4$ and $r_{N,s}=N$.
Let $b_0\in\n$ be any integer such that $b_0 \equiv r_{N,s} \Mod{3}$.
For any integer $i$, define
\begin{equation}\label{defaibi}
b_i:=b_0+3i \quad \text{and}\quad
a_i:=\frac{2}{3}(N-b_i)+b_i=a_0+i.
\end{equation}
Note that $a_k\equiv 2\Mod{4}$ for some $k \in \{0,1,2,3\}$.
Let $b=b_k$ for such $k$ and let $a=\frac{2}{3}(N-b)+b$.
Note that $a=a_k$, and $7a-b^2\equiv 2 \Mod{4}$.
Hence, $[7,b,a]\ra \gen(L)$. Therefore, $b=b_k$ satisfies \eqref{condb:2}.

Note that the length of $I_{N,A}$ is greater than or equal to $3sB=12$ by \eqref{condI}.
Therefore, there is an integer $b\in I_{N,A}$ satisfying \eqref{condb:1} and \eqref{condb:2}. Hence $N \pra P_{5,\aaa}$.\\


\noindent {\bf (Case 2)} $\aaa=(1,2,2,3)$, $A=A_\aaa=8$, $L=L_\aaa=\df{1,2,2,3}$.

Note that this $\aaa$ is of {\bf Type 2}, precisely, the class number of $L$ is $1$, and for any $w\in R(A,L)$, there is a $\tau\in O(L)$ such that $\tau(w)$ is equal to one of the following three vectors of $L$:
$$
w_\aaa, \quad w'=2w_1+w_2+w_3, \quad \text{and} \quad w''=2w_2.
$$
In other word, $R(A,L)/O(L)=\left\{[w_\aaa],[w'],[w''] \right\}$.
Note that $R_{w}(L,\alpha,\beta,2)=R_{w}^{w_\aaa}(L,\alpha,\beta,2)$ if and only if $(\alpha,\beta)\in(\z/2\z)^2$ is in
$$
\begin{array}{ll}
\{(0,0),(0,1),(1,0),(1,1)\} & \text{if } w=w_\aaa,\\
\{(0,0),(0,1),(1,1)\} & \text{if } w=w',\\
\{(0,1),(1,1)\} & \text{if } w=w''.
\end{array}
$$
As a sample, consider the case when $w=w'$ and $(\alpha,\beta)=(0,0)$. Note that 
$$
u=\sum_{i=1}^4 y_iw_i \in R_{w'}(L,0,0,2) \quad \Leftrightarrow \quad  y_1\equiv y_4 \Mod{2}.
$$
Also, note that there is a $\sigma_T \in R_{w',w_\aaa}(L,2)$ defined by
$$
\sigma_T(w_j)=\sum_{i=1}^4 t_{ij}w_i \text{ for each } 1\le j \le 4, \text{ where } T=(t_{ij})=\frac{1}{2}\cdot { \tiny \setlength\arraycolsep{2pt} \begin{pmatrix}
1&0&0&3 \\ 0&0&2&0 \\ 0&2&0&0 \\ 1&0&0&-1
\end{pmatrix}}.
$$
Thus, $\sigma_T(u)\in L$ for any $u\in R_{w'}(L,0,0,2)$, hence $R_{w'}(L,0,0,2)=R_{w'}^{w_\aaa}(L,0,0,2)$.
Note that $\mathfrak{S}_{2}=\mathfrak{S}_{\aaa,2}(L)=\{(0,1),(1,1)\}$, and
$\mathcal{N}(\mathfrak{S}_{2})=\{0\}$.
Let us set $s=2$.

Note that $[8,b',a'] \ra \gen(L)$ if and only if 
$8a'-b'^2$ is not of the form $2^{2c}(8d+5)$	
for some non-negative integers $c$ and $d$.
Assume that $a'\equiv b'\equiv 1\Mod{2}$. Then $8a'-b'^2\equiv 7 \Mod{8}$, hence $[8,b',a'] \ra \gen(L)$. Moreover, since $(a',b')\Mod{2} \in \mathfrak{S}_{2}$, there is a representation $\sigma : [8,b',a'] \ra L$ with $\sigma(v_1)=w_\aaa$.

Let $N\ge N_\aaa=1529$ be a positive integer, and let us set $B=1$. 
Note that the length of $I_{N,A}$ is greater than or equal to $3sB=6$ by \eqref{condI}.
Let us choose the residue $r_{N,2}$ modulo $6$ corresponding to $(1,1)\in \mathfrak{S}_2$.
Let $b$ be a positive integer in $I$ such that $b \equiv r_{N,s} \Mod{6}$, and let $a=\frac{2}{3}(N-b)+b$.
Note that $a\equiv b\equiv 1 \Mod{2}$.
Hence, $[8,b,a]\ra \gen(L)$. Thus, this integer $b$ satisfies \eqref{condb:1} and \eqref{condb:2}.
Hence $N \pra P_{5,\aaa}$.\\


\noindent {\bf (Case 3)} $\aaa=(1,1,2,5)$, $A=A_\aaa=9$, $L=L_\aaa=\df{1,1,2,5}$.

Note that this $\aaa$ is of {\bf Type 3}, precisely, the class number of $L$ is $2$, and all isometric classes in the genus are
$$
L=\df{1,1,2,5} \quad \text{and} \quad 
K=\z u_1 + \z u_2 + \z u_3 + \z u_4 = \df{1}\perp\begin{pmatrix}2&1&1\\1&2&0\\1&0&4\end{pmatrix}.
$$
First, we may check that $\mathfrak{S}_{\aaa,2}(K)=\{(0,0),(1,1)\}$. 
In fact, 
$$
R(A,K)/O(K)=\{[v':=3u_1],\ [v'':=u_1+2u_2],\ [v''':=u_1+2u_2-u_4]\},
$$ 
and $R_v(K,\alpha,\beta,2)=R_v^L(K,\alpha,\beta,2)$ if and only if $(\alpha,\beta)\in(\z/2\z)^2$ is in
$$
\begin{array}{ll}
\{(0,0),(0,1),(1,0),(1,1)\} & \text{if } v=v'\text{ or } v'',\\
\{(0,0),(1,1)\} & \text{if } v=v'''.
\end{array}
$$
As a sample, let us show $R_v(K,\alpha,\beta,2)=R_v^L(K,\alpha,\beta,2)$ for $v=v''=u_1+2u_2$ and for any $(\alpha,\beta)\in(\z/2\z)^2$.
For a $4\times 4$ matrix $T=(t_{ij})$ with entries in $\q$, define a linear map $\sigma_T:\q K \ra \q L$ by $\sigma_T(u_j)=\sum_{i=1}^4 t_{ij}w_i$ for any $1\le j \le 4$. Then we may check that $\sigma_{T_1},\sigma_{T_2},\sigma_{T_3}\in R_v(K,L,2)$, where
$$
T_1=\frac{1}{2}\cdot { \tiny \setlength\arraycolsep{2pt} \begin{pmatrix}
	2&0&0&0 \\ 0&1&0&4 \\ 0&1&2&0 \\ 0&1&0&0
	\end{pmatrix}},\ \
T_2=\frac{1}{2}\cdot { \tiny \setlength\arraycolsep{2pt} \begin{pmatrix}
	2&0&0&0 \\ 0&0&1&3 \\ 0&2&1&1 \\ 0&0&1&-1
\end{pmatrix}},\ \ \text{and} \ \
T_3=\frac{1}{2}\cdot { \tiny \setlength\arraycolsep{2pt} \begin{pmatrix}
	2&0&0&0 \\ 0&1&1&-3 \\ 0&1&-1&1 \\ 0&1&1&1
	\end{pmatrix}}.
$$
In fact, we have $\sigma_{T_1}(v)=\sigma_{T_3}(v)=w_\aaa$ and $\sigma_{T_2}(v)=w_1+2w_3$.
Let $u=\sum_{i=1}^4 y_iu_i \in R_v(K,\alpha,\beta,2)$.
Then $\tau(u)\in L$ for some $\tau\in R(K,L,2)$, namely,
$$
\begin{array}{ll}
\sigma_{T_1}(u)\in L & \text{if } y_2\equiv 0\Mod{2},\\
\sigma_{T_2}(u)\in L & \text{if } y_2\equiv 1\Mod{2} \text{ and } y_3+y_4\equiv 0\Mod{2},\\
\sigma_{T_3}(u)\in L & \text{if } y_2\equiv 1\Mod{2} \text{ and } y_3+y_4\equiv 1\Mod{2}.
\end{array}
$$
This proves $R_v(K,\alpha,\beta,2)=R_v^L(K,\alpha,\beta,2)$ for $v=v''$.

On the other hand, note that $R(A,L)/O(L)=\left\{[w_\aaa],[w'],[w''],[w'''] \right\}$, where
$$
w'=3w_1, \quad w''=2w_1+w_4, \quad \text{and} \quad w'''=w_1+2w_3.
$$
By checking whether or not $R_{w}(L,\alpha,\beta,24)=R_{w}^{w_\aaa}(L,\alpha,\beta,24)$ for $w=w_\aaa,w',w'',w'''$ and for each $(\alpha,\beta)\in(\z/24\z)^2$,
we have
$$
\mathfrak{S}_{\aaa,24}(L) \subseteq \{(\alpha,\beta)\in (\z/24\z)^2 \mid \alpha \equiv \beta \Mod{2}\}\subseteq \mathfrak{S}_{\aaa,24}(K).
$$
The last set inclusion is due to Remark \ref{rmk-mathfrakS} \ref{rmk-mathfrakS:2} with $\mathfrak{S}_{\aaa,2}(K)=\{(0,0),(1,1)\}$.
Thus, we have $\mathfrak{S}_{24}=\mathfrak{S}_{\aaa,24}(L)\cap \mathfrak{S}_{\aaa,24}(K)=\mathfrak{S}_{\aaa,24}(L)$
and we may verify that $\mathcal{N}(\mathfrak{S}_{24})$ is the complete set of residues modulo $12$. Let us set $s=24$.

Note that $[9,b',a'] \ra \gen(L)$ if and only if 
$9a'-b'^2$ is not of the form $5^{2c+1}(5d\pm2)$ 
for some non-negative integers $c$ and $d$.
Moreover, if $(a',b')\Mod{24} \in \mathfrak{S}_{24}$ and $[9,b',a'] \ra \gen(L)$, then there is a representation $\sigma : [9,b',a'] \ra L$ with $\sigma(v_1)=w_\aaa$.

Let $N\ge N_\aaa=2373728$ be a positive integer, and let us set, $B=3$.
Let us choose any possible residue $r_{N,s}$ modulo $3s=72$.
Let $b_0\in\n$ be any integer such that $b_0 \equiv r_{N,s} \Mod{18}$.
For any integer $i$, we define $a_i$ and $b_i$ as in \eqref{defaibi}, and define
$D_i:=Aa_i-b_i^2$.
One may easily show that $D_k\not\equiv 0\Mod{5}$ for some $k\in\{0,6,12\}$.
Let $b=b_k$ for such $k$, and let $a=\frac{2}{3}(N-b)+b=a_k$.
Note that $9a-b^2 \not\equiv 0 \Mod{5}$, hence
$[9,b,a]\ra \gen(L)$. 

Thus, there is an integer $b\in I_{N,A}$ satisfying \eqref{condb:1} and \eqref{condb:2} since the length of $I_{N,A}$ is greater than or equal to $3sB=216$ by \eqref{condI}. Hence $N\pra P_{m,\aaa}$.\\
%


\noindent {\bf (Case 4)} $\aaa=(1,1,3,6)$, $A=A_\aaa=11$, $L=L_\aaa=\df{1,1,3,6}$.

This $\aaa$ is also of {\bf Type 3}, however we provide the proof of this case to show some argument that deals with a different kind of congruence conditions for the local representation. Note that the class number of $L$ is $2$, and all isometric classes in the genus are
$$
L=\df{1,1,3,6} \quad \text{and} \quad 
K=\begin{pmatrix}2&1 \\ 1&2\end{pmatrix}\perp \df{2,3}.
$$
As described in the above cases, we may verify that $\mathcal{N}(\mathfrak{S}_{8})=\{0,1,2,3\}$, so let us set $s=8$.

Note that $[11,b',a'] \ra \gen(L)$ if and only if 
$$
11a'-b'^2 \not \equiv 2\Mod{3} \quad \text{and} \quad [11,b',a'] \not \cong \df{11,11} \ \text{over} \ \q_{11},
$$
and the latter condition holds if $b' \not\equiv 0\Mod{11}$.
Moreover, if $(a',b')\Mod{8} \in \mathfrak{S}_{8}$ and $[11,b',a'] \ra \gen(L)$, then there is a representation $\sigma : [11,b',a'] \ra L$ with $\sigma(v_1)=w_\aaa$.

Let $N\ge N_\aaa=324897$ be a positive integer, and let us set $B=3$.
Let us choose any possible residue $r_{N,s}$ modulo $3s=12$, and
let $b_0\in\n$ be any integer such that $b_0 \equiv r_{N,s} \Mod{12}$.
For any integer $i$, we define $a_i$ and $b_i$ as in \eqref{defaibi}, and define
$D_i:=Aa_i-b_i^2$.
One may easily show that $\{ D_i \Mod{3} : i=0,4,8\}=\{0,1,2\}$, and at most one of $\{b_i : i=0,4,8\}$ is a multiple of $11$. Therefore, $D_k\not\equiv 2\Mod{3}$ and $b_k\not\equiv0 \Mod{11}$ for some $k\in\{0,4,8\}$.
Let $b=b_k$ for such $k$, and let $a=\frac{2}{3}(N-b)+b=a_k$.
Note that $11a-b^2\not\equiv 2 \Mod{3}$ and $b\not\equiv 0\Mod{11}$, hence
$[11,b,a]\ra \gen(L)$.

Thus, there is an integer $b\in I_{N,A}$ satisfying \eqref{condb:1} and \eqref{condb:2} since the length of $I_{N,A}$ is greater than or equal to $3sB=36$ by \eqref{condI}.
Hence $N\pra P_{m,\aaa}$.\\

\noindent {\bf (Case 5)} $\aaa=(1,2,3,8)$, $A=A_\aaa=14$, $L=L_\aaa=\df{1,2,3,8}$.

Note that this $\aaa$ is also of {\bf Type 3}, precisely, the class number of $L$ is $2$, and all isometric classes in the genus are
$$
L=\df{1,2,3,8} \quad \text{and} \quad 
K=\df{2}\perp\begin{pmatrix}3&1&1\\1&3&1\\1&1&4\end{pmatrix}.
$$
We may verify that 
$$
\mathfrak{S}_{12}\supseteq\mathfrak{S}_{12}':=\{(3,9),(11,7),(11,5),(3,3),(11,1),(11,11)\},
$$
and $\mathcal{N}(\mathfrak{S}_{12})=\{0,1,2,3,4,5\}$. Let us set $s=12$.

Note that $[14,b',a'] \ra \gen(L)$ if and only if
$9a'-b'^2$ is not of the form $2^{2c+2}(8d+5)$, $4(8d+3)$, or $4(8d+7)$ 
for some non-negative integers $c$ and $d$.
Moreover, this holds if $a'\equiv b' \equiv 1\Mod{2}$.
Therefore, if $(a',b') \Mod{24} \in\mathfrak{S}_{12}'$, then there is a representation $\sigma : [14,b',a'] \ra L$ with $\sigma(v_1)=w_\aaa$.

Let $N\ge N_\aaa=103969$ be a positive integer, and let us set $B=1$.
Note that the length of $I=I_{N,A}$ is greater than or equal to $3sB=36$ by \eqref{condI}.
Let us choose the residue $r_{N,s}$ modulo $3s=36$ corresponding to the element in $\mathfrak{S}_{12}'$ according as the residue of $N$ modulo $6$.
Let $b$ be a positive integer in $I$ such that $b \equiv r_{N,s} \Mod{36}$, and let $a=\frac{2}{3}(N-b)+b$.
Note that $(a,b) \Mod{12} \in \mathfrak{S}_{12}'$ by Lemma $\ref{complete-residue}$.
Thus, this integer $b$ satisfies \eqref{condb:1} and \eqref{condb:2}.
Hence $N\pra P_{m,\aaa}$.\\

\noindent {\bf (Case 6)} $\aaa=(1,2,3,3)$, $A=A_\aaa=9$, $L=L_\aaa=\df{1,2,3,3}$.

Note that the class number of $L$ is $2$ and all isometric classes in the genus are
$$
L=\df{1,2,3,3}, \quad \text{and} \quad
K=\df{1}\perp\begin{pmatrix}2&1 \\ 1&2\end{pmatrix}\perp \df{6}.
$$
Note that $[9,b',a'] \ra \gen(L)$ if and only if 
$9a'-b'^2$ is not of the form $3^{2c}(3d+1)$	
for some non-negative integers $c$ and $d$.

Let $N\ge 36689$ be a positive integer with $N\equiv 0\Mod{3}$.
We may verify that 
$$
\{ (\alpha,\beta)\in \mathfrak{S}_{\aaa,6} \mid \alpha \equiv \beta \Mod{2}\}=\{(0,0),(3,3)\}.
$$
Therefore, if $a'\equiv b' \equiv 0 \Mod{3}$ and $[9,b',a']\ra \gen(L)$ then there is a representation $\sigma : [9,b',a']\ra L \text{ with } \sigma(v_1)=w_\aaa$.
Note that the length of $I=I_{N,A}$ is greater than or equal to $27$.
Let $b\in I$ be a positive integer such that
$$
\begin{cases}
b \equiv 3 \Mod{9} & \text{if } N\equiv 3 \Mod{9},\\
b \equiv 6 \Mod{9} & \text{if } N\equiv 6 \Mod{9},\\
b \equiv 9 \Mod{27} & \text{if } N\equiv 0 \Mod{27},\\
b \equiv 18 \Mod{27} & \text{if } N\equiv 9 \Mod{27},\\
b \equiv 0 \Mod{27} & \text{if } N\equiv 18 \Mod{27}.\\
\end{cases}
$$
Then for $a=\frac{2}{3}(N-b)+b$, we have $a\equiv b\equiv 0\Mod{3}$ and $9a -b^2$ is not of the form $3^{2c}(3d+1)$ for some non-negative integers $c$ and $d$, since
$$
\begin{cases}
9a -b^2 \equiv 18 \Mod{27} &\text{if } N\equiv 3 \text{ or } 6\Mod{9},\\
9a -b^2 \equiv 27 \Mod{81} &\text{if } N\equiv0\Mod{9}.
\end{cases}
$$
Therefore, we have $[9,b,a]\ra\gen(L)$, so $b$ satisfies \eqref{condb:1} and \eqref{condb:2}. Hence $N\pra P_{5,\aaa}$.

Let $N\ge N_\aaa=262933$ be a positive integer with $N\not\equiv 0\Mod{3}$.
As before, we may verify that $\mathcal{N}(\mathfrak{S}_{24})$ is the complete set of residue modulo $12$, so let us set $s=24$.
Let us set $B=1$.
We choose any possible $r_{N,24}$ and let $b\in\n$ be any integer such that $b\equiv r_{N,24}\Mod{72}$.
Since $b\equiv N \Mod{3}$, we have $b\not\equiv 0 \Mod{3}$, so we have $9a-b^2\equiv 2\Mod{3}$, hence $[9,b,a]\ra\gen(L_\aaa)$.

Thus, there is an integer $b\in I_{N,A}$ satisfying \eqref{condb:1} and \eqref{condb:2} since the length of $I_{N,A}$ is greater than or equal to $3sB=72$ by \eqref{condI}.
Hence $N\pra P_{m,\aaa}$.\\

\noindent {\bf (Case 7)} $\aaa=(1,2,4,5)$, $A=A_\aaa=12$, $L=L_\aaa=\df{1,2,4,5}$.

Note that the class number of $L$ is $3$, and all isometric classes in the genus are
$$
L=\df{1,2,4,5}, \quad K_1=\df{1,1,2,20}, \quad \text{and} \quad K_2=\begin{pmatrix}2&1&1&0 \\ 1&3&1&1 \\ 1&1&3&1 \\ 0&1&1&4 \end{pmatrix}.
$$
Note that $[12,b',a']\ra \gen(L)$ if and only if $12a'-b'^2\neq 2^{2c}(16d+6)$ for some non-negative integers $c$ and $d$.

Let us set $s=24$. We may compute the set $\mathfrak{S}_{24}$ and may verify that $\mathcal{N}(\mathfrak{S}_{24})=\{0,3,4,6,7,8,9,11\}$. 
For each $n\in \mathcal{N}(\mathfrak{S}_{24})$, define 
$$
\mathfrak{S}_{24}^{(n)}=\left\{(\alpha,\beta) \in \mathfrak{S}_{24} \mid \alpha\equiv\beta \Mod{2}, \ \ \frac{3}{2}(\alpha-\beta)+\beta \equiv n \Mod{12}\right\}.
$$
Then we may observe that
\begin{gather}
\mathfrak{S}_{24}^{(0)}\supseteq\{(3,9)\}, \ \ \mathfrak{S}_{24}^{(3)}\supseteq\{(3,3)\}, \ \ \mathfrak{S}_{24}^{(6)}\supseteq\{(3,21)\}, \ \ \mathfrak{S}_{24}^{(9)}\supseteq\{(9,9)\}, \label{eq1245:0}\\ \nonumber
\mathfrak{S}_{24}^{(4)}=\{(0,16),(4,4),(8,16),(12,4),(16,16),(20,4)\}, \\ \nonumber
\mathfrak{S}_{24}^{(7)}=\{(2,16),(6,4),(10,16),(14,4),(18,16),(22,4)\},\\\nonumber
\mathfrak{S}_{24}^{(8)}=\{(0,8),(4,20),(8,8),(12,20),(16,8),(20,20)\},\\\nonumber
\mathfrak{S}_{24}^{(11)}=\{(2,8),(6,20),(10,8),(14,20),(18,8),(22,20)\},
\end{gather}

Let $N\ge 355873$ be a positive integer and let $B=1$. Note that the length of $I=I_{N,A}$ is greater than or equal to $72$. 
First assume that $N\equiv 0\Mod{3}$. Let us choose the residue $r_{N,s}$ modulo $3s=72$ corresponding to the $(\alpha,\beta)\in (\z/24\z)^2$ given in \eqref{eq1245:0} according as the residue of $N$ modulo $12$. Let $b\in I$ be a positive integer such that $b\equiv r_{N,s}\Mod{72}$, and let $a=\frac{2}{3}(N-b)+b$.
Note that $(a,b) \Mod{24} \in \mathfrak{S}_{24}$ by Lemma $\ref{complete-residue}$, so $a\equiv b \equiv 1 \Mod{2}$.
Therefore, $12a-b^2\equiv 1 \Mod{2}$, hence $[12,b,a]\ra \gen(L_\aaa)$.
Thus, this integer $b$ satisfies \eqref{condb:1} and \eqref{condb:2}.
Hence $N\pra P_{m,\aaa}$.

Now assume that $N\equiv4 \Mod{12}$.
Let us choose any possible residue $r_{N,s}$ modulo $72$, which corresponds to one of the pairs in $\mathfrak{S}_{24}^{(4)}$. 
Let $b_0\in\n$ be any integer such that $b_0 \equiv r_{N,s} \Mod{72}$.
For any integer $i$, we define $a_i$ and $b_i$ as in \eqref{defaibi}, and define
$D_i:=Aa_i-b_i^2$.
We claim that 
\begin{enumerate}[label={\rm(\arabic*)}]
	\item $(a_{4j},b_{4j}) \Mod{24} \in \mathfrak{S}_{24}^{(4)}$ for any integer $j$, and
	\item $\{D_i \Mod {64} : i=0,4,8,12\}=\{0,16,32,48\}$.
\end{enumerate}
The first assertion follows clearly from the definition of $(a_i,b_i)$ with the fact that 
$$
\mathfrak{S}_{24}^{(4)}=\{(4j,16+12j)\Mod{24} : j\in \z\} \quad  \text{and} \quad  (a_0,b_0) \Mod{24} \in \mathfrak{S}_{24}^{(4)}.
$$ 
To prove the second one, note that $D_{4j}\equiv0\Mod{16}$ for any integer $j$ since $a_{4j}\equiv b_{4j} \equiv 0 \Mod{4}$.
Moreover, one may show that $D_{4j_1} \equiv D_{4j_2} \Mod{64}$ if and only if $j_1 \equiv j_2 \Mod{16}$.
These implies the second assertion.
Thus, $D_k \equiv 16 \Mod{64}$ for some $k \in \{0,4,8,12\}$. Let $b=b_k$ for such $k$, and let $a=\frac{2}{3}(N-b)+b=a_k$.
Since $12a-b^2=D_k\equiv 16\Mod{64}$, $[12,b,a]\ra\gen(L)$. 
To sum up, we may find a integer $b$ among any $72$ consecutive integers satisfying \eqref{condb:2}.
Thus, there is a positive integer $b\in I$ satisfying \eqref{condb:1} and \eqref{condb:2}.
Hence $N\pra P_{5,\aaa}$.

The proofs for $N\equiv 7,8, \text{ or } 11\Mod{12}$ are quite similar to the above, so they are left to the reader.\\

\noindent {\bf (Case 8)} $\aaa=(1,2,4,7)$, $A=A_\aaa=14$, $L=L_\aaa=\df{1,2,4,7}$.

Note that the class number of $L$ is $5$, and all isometric classes in the genus are
$$
\begin{array}{c}
L=\df{1,2,4,7}, \quad K_1=\df{1,1,4,14},\quad K_2=\df{1,1,2,28},\\ [5pt]
K_3=\df{2}\perp \begin{pmatrix}3&1&1 \\ 1&3&1 \\ 1&1&4 \end{pmatrix}, \quad \text{and} \quad
K_4=\df{2}\perp \begin{pmatrix}2&0&1 \\ 0&4&2 \\ 1&2&5 \end{pmatrix}.
\end{array}
$$
For the simplicity of the notation, we will denote the corresponding basis of $K_i$ by $\{u_1,u_2,u_3,u_4\}$.
Note that we always have $[14,b',a'] \ra \gen(L)$.
We will construct a set $\mathfrak{S}\subset(\z/66\z)^2$ such that 
\begin{newenum}
	\item\label{cond1247:1} for any $(\alpha,\beta)\in\mathfrak{S}$, and for any binary $\z$-lattice $\ell$ in the set
	$$
	\left\{[14,b',a'] \mid (a',b') \equiv (\alpha,\beta) \Mod{66}, \ b'\not\equiv 0 \Mod{7}\right\},
	$$
	there exists a representation $\sigma:\ell\ra L$ such that $\sigma(v_1)=w_\aaa$.
	\item\label{cond1247:2} the set $\mathcal{N}(\mathfrak{S})$ is the complete set of residue modulo $33$.
\end{newenum}
Assume that $b'\not\equiv0\Mod{7}$. Since $\ell=[14,b',a'] \ra \gen(L)$, we have $\ell\ra L$ or $\ell\ra K_i$ for some $1\le i \le 4$ by Lemma \ref{localglobal}.

First, assume that $\rho : \ell\ra L$. Note that $|R(A,L)/O(L)|=1$, hence there is a $\tau\in O(L)$ such  that $\tau(\rho(v_1))=w_\aaa$.
Thus, $\sigma=\tau\circ\rho : \ell \ra L$ satisfies $\sigma(v_1)=w_\aaa$.

Next, assume that $\rho : \ell\ra K_1$. We may assume that $\rho(v_1)$ is equal to either
$$
u':=3u_1+u_2+u_3 \quad \text{or} \quad u'':=u_4.
$$
Let $\rho(v_2)=\sum_{i=1}^4 y_iu_i$ for some $y_i\in\z$. If $\rho(v_1)=u''$, then we have 
$$
b'=B(v_1,v_2)=B(\rho(v_1),\rho(v_2))=14y_4\equiv 0\Mod{7},
$$
which is a contradiction.
Therefore, we may assume that $\rho(v_1)=u'$. Let
$$
\mathfrak{S}(K_1)=\left\{ (\alpha,\beta)\in(\z/66\z)^2 \mid R_{u'}(K_1,\alpha,\beta,66) = R_{u'}^L(K_1,\alpha,\beta,66) \right\}.
$$
Then we have $\ell\ra L$ if $(a',b') \Mod{66} \in \mathfrak{S}(K_1)$.

Next, assume that $\rho : \ell\ra K_2$. Since $R(A,K_2)=\emptyset$, this is impossible.

Next, assume that $\rho : \ell\ra K_3$. 
Note that $\mathfrak{S}_{\aaa,2}(K_3)=\{(0,0),(0,1)\}$. Therefore, we have $\ell\ra K_3$ if $a'\equiv b'\equiv 0\Mod{2}$.

Finally, assume that $\rho : \ell\ra K_4$. We may assume that $\rho(v_1)$ is equal to
$$
u':=2u_1+u_2+u_3,  \quad u'':=u_1+2u_2+u_3, \quad \text{or} \quad u''':=u_2+u_3-2u_4.
$$
Let $\rho(v_2)=\sum_{i=1}^4 y_iu_i$ for some $y_i\in\z$. If $\rho(v_1)=u'''$, then we have 
$$
b'=B(v_1,v_2)=B(\rho(v_1),\rho(v_2))=-7y_4\equiv 0\Mod{7},
$$
which is a contradiction.
Therefore, we may assume that $\rho(v_1)=u'$ or $u''$. Let
$$
\mathfrak{S}(K_4)=\left\{ (\alpha,\beta)\in(\z/66\z)^2 \mid R_{u}(K_4,\alpha,\beta,66) = R_{u}^L(K_4,\alpha,\beta,66) \ \forall u\in\{u',u''\}\right\}.
$$
Then we have $\ell\ra L$ if $(a',b') \Mod{66} \in \mathfrak{S}(K_4)$.

Now, let us define
$$
\mathfrak{S}=\left\{(\alpha,\beta)\in \mathfrak{S}(K_1)\cap\mathfrak{S}(K_4) \mid \alpha\equiv\beta \equiv 0 \Mod{2}\right\} \subset (\z/66\z)^2.
$$
Then $\mathfrak{S}$ satisfies condition \ref{cond1247:1} by the above argument, and we may directly check that condition \ref{cond1247:2} also holds.

Let us set $s=66$ and $B=2$.
Let $N\ge N_\aaa=12687051$ be a positive integer.
Let us choose any possible residue $r_{N,s}$ modulo $3s=198$, and
let $b_0\in\n$ be any integer such that $b_0 \equiv r_{N,s} \Mod{198}$.
Note that $b_k=b_0+3k\not \equiv 0 \Mod{7}$ for some $k\in\{0,66\}$.
Then $b=b_k$ for such $k$ satisfies \eqref{condb:2}. 

Thus, there is an integer $b\in I_{N,A}$ satisfying \eqref{condb:1} and \eqref{condb:2} since the length of $I_{N,A}$ is greater than or equal to $3sB=396$ by \eqref{condI}.
Hence $N\pra P_{m,\aaa}$.
\end{proof}

\section{Universal sums of pentagonal numbers}\label{sec-escalation}

Recall that for $\aaa=(a_1,\ldots,a_k) \in \n^k$, and $\bm{x}=(x_1,\ldots,x_k)\in \z^k$, we say the sum
$$
P_{5,\aaa}(\bm{x}):=\sum_{i=1}^k a_i P_5(x_i)
$$
is {\em universal} over $\nn$ if $P_{5,\aaa}(\bm{x})=N$ has a solution $\bm{x}\in \n_0^k$ (in other words, $N\pra P_{5,\aaa}$) for any non-negative integer $N$.
If this is the case, we call the sum $P_{5,\aaa}$ a {\em universal sum of pentagonal numbers}. 

A universal sum of pentagonal numbers $P_{5,\aaa}$ is called {\em proper} if  any proper sum $P_{5,\hat{\aaa}_i}$ $(1\le i \le k)$ of $P_{5,\aaa}$  is not universal over $\nn$, where $\hat{\aaa}_i=(a_1,\ldots,\hat{a}_i,\ldots,a_k)$.
Here, the `hat' symbol $\, \hat{} \, $ over $a_i$ indicate that this integer is deleted from the sequence $a_1,\ldots,a_n$.
When the sum $P_{5,\aaa}$ is not universal over $\nn$, the least positive integer that is not represented by $P_{5,\aaa}$ over $\nn$ is called the {\em truant} of $P_{5,\aaa}$ and is denoted by $t(P_{5,\aaa})$.
Without loss of generality, we always assume that 
$$
a_1\leq a_2\leq \cdots \leq a_k.
$$

In this section, we determine all proper universal sums of pentagonal numbers by using the escalation method (see Table \ref{table3} for the complete list). Furthermore, we give the proof of Theorem \ref{63thm} which provides an effective criterion on the universality of an arbitrary sum of pentagonal numbers. 

For $\aaa=(a_1,a_2,\dots,a_k)\in\n^k$, we define $\aaa_i:=(a_1,a_2,\dots,a_i)\in\n^i$ for each $1\leq i\leq k$.
In proving our results, we will frequently use the following lemma.

\begin{lem}\label{lem-bound}
Let $k$ and $N$ be positive integers and let $\aaa=(a_1,\ldots,a_k)\in\n^k$ with $a_1\le\cdots \le a_k$. Assume that $N$ is represented by $P_{5,\aaa}$ over $\nn$. Then, we have $a_1\le N$.
Moreover, if $N$ is not represented by $P_{5,\aaa_{i-1}}$ over $\nn$ for some $i\in\{2,\ldots,k\}$, then $a_i\le N$.
\end{lem}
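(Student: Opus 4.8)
The plan is to reduce everything to two elementary observations about the pentagonal numbers: $P_5(0)=0$, and $P_5(x)=\frac{x(3x-1)}{2}\ge 1$ for every integer $x\ge 1$. Combined with the standing assumption $a_1\le\cdots\le a_k$, both claims then follow by inspecting a single representation of $N$ over $\nn$.

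For the first assertion, I would fix $(x_1,\dots,x_k)\in\nn^k$ with $N=\sum_{j=1}^k a_jP_5(x_j)$. Since $N>0$ while $P_5(0)=0$, at least one coordinate, say $x_j$, must be positive; then $P_5(x_j)\ge 1$ and all terms are non-negative, so $N\ge a_jP_5(x_j)\ge a_j\ge a_1$, which gives $a_1\le N$.

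For the second assertion, suppose $N$ is not represented by $P_{5,\aaa_{i-1}}$ over $\nn$, and again fix a representation $N=\sum_{j=1}^k a_jP_5(x_j)$. If $x_j=0$ held for every $j\ge i$, then $N=\sum_{j=1}^{i-1}a_jP_5(x_j)=P_{5,\aaa_{i-1}}(x_1,\dots,x_{i-1})$, contradicting the hypothesis; hence $x_j\ge 1$ for some index $j\ge i$, and then $N\ge a_jP_5(x_j)\ge a_j\ge a_i$, where the last inequality uses $a_i\le a_j$ because $j\ge i$. There is no genuine obstacle here — the only point needing a line of care is recording that $P_5$ vanishes precisely at $0$ and is positive elsewhere, which is immediate from the factorization $P_5(x)=\frac{x(3x-1)}{2}$.
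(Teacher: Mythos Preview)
Your proof is correct and follows essentially the same approach as the paper's: both arguments rest on the observation that $P_5(0)=0$ while $P_5(x)\ge 1$ for $x\ge 1$, and then inspect a single representation of $N$ to locate an index $j$ with $x_j\ge 1$, yielding $N\ge a_j P_5(x_j)\ge a_j$. The paper phrases the second part as a contrapositive (assuming $a_i>N$ forces $x_j=0$ for all $j\ge i$), whereas you argue by contradiction, but this is only a cosmetic difference.
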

\begin{proof}
Note that $\{P_5(x) \mid x \in \nn\}=\{0, 1, 5, 12, 22, 35, 51,\ldots\}$. Indeed, these integers are the least seven pentagonal numbers.
Assume that 
$$
P_{5,\aaa}(\bm{x})=\sum_{i=1}^k a_i P_5(x_i)=N
$$ 
for some $\bm{x}=(x_1,\ldots,x_k)\in \n_0^k$.
Assume that $a_i>N$ for some $2\le i \le k$. 
Then, we should have $x_j=0$ for any $i\le j \le k$ since otherwise, we have $P_{5,\aaa}(\bm{x}) >N$ because $N<a_i\le\cdots\le a_k$.
Hence, we have $N\pra P_{5,\aaa_{i-1}}$.
One may show that if $a_1>N$, then $P_{5,\aaa}(\bm{x})=N$ cannot have a solution $\bm{x}\in \n_0^k$. This proves the lemma.
\end{proof}

Now, we determine all proper universal sums of pentagonal numbers.

Assume that a sum $P_{5,\aaa}$ is proper universal over $\nn$. 
Since $1\pra P_{5,\aaa}$, $a_1=1$ by Lemma \ref{lem-bound}.
Since $2\pra P_{5,\aaa}$ and $t(P_{5,(1)})=2$, we have $1\le a_2\le 2$ by Lemma \ref{lem-bound}.
Note that 
$$
t(P_{5,\aaa_2})=
\begin{cases}
3 &\text{ if } \aaa_2=(1,1),\\
4 &\text{ if } \aaa_2=(1,2).
\end{cases}
$$
Therefore, by Lemma \ref{lem-bound}, $\aaa_3$ should be one of the following
$$
(1,1,a_3) \text{ with } 1\le a_3 \le 3, \quad  (1,1,a_3) \text{ with } 2\le a_3 \le 4.
$$
One may easily check that there are no ternary universal sums of pentagonal numbers.
Indeed, for each of the above possible $\aaa_3$ the truant $t(P_{5,\aaa_3})$ of $P_{5,\aaa_3}$ is 
$$
t(P_{5,\aaa_3})=
\begin{cases}
4 &\text{ if } \aaa_3=(1,1,1),\\
9 &\text{ if } \aaa_3=(1,1,2),\\
7 &\text{ if } \aaa_3=(1,1,3),\\
6 &\text{ if } \aaa_3=(1,2,2),\\
9 &\text{ if } \aaa_3=(1,2,3),\\
8 &\text{ if } \aaa_3=(1,2,4).
\end{cases}
$$
Again by Lemma \ref{lem-bound}, we have $a_3\leq a_4\leq t(P_{5,\aaa_3})$ for each of the possible cases.
Therefore, there are $34$ candidates of quaternary sums $P_{5,\aaa_4}$ of pentagonal numbers.
One may easily check that exactly $15$ quaternary sums $P_{5,\aaa_4}$ among those $34$ candidates are universal over $\nn$ by Theorem \ref{thmquatpolyrep} (see also Tables \ref{table-data1} and \ref{table-data2}).

Assume that $P_{5,\aaa_4}$ is not universal over $\nn$. Then, $\aaa_4$ should be one of the $19=34-15$ quadruples in Table \ref{table-data1} or \ref{table-data2} other than $(1,2,4,12)$ such that the set 
$$
E(P_{5,\aaa})=\nn\setminus\{n\in\nn \mid  n \pra P_{5,\aaa}\}
$$ 
for $\aaa=\aaa_4$ is a nonempty set.
Note that for each of those $\aaa_4$'s, $t(P_{5,\aaa_4})$ is the least integer in the set $E(P_{5,\aaa_4})$.
Moreover, we necessarily have $a_4\leq a_5\leq t(P_{5,\aaa_4})$ by Lemma \ref{lem-bound}. 
Therefore, there are $373$ candidates of quinary sums $P_{5,\aaa_5}$.

We claim that exactly $371$ quinary sums among $373$ candidates, namely those except two quinary sums $P_{5,\aaa}$ with $\aaa=(1,1,3,4,7)$ and $(1,2,2,5,10)$, are in fact universal over $\nn$
(see the second part of Table \ref{table3} for the complete list of quinary proper universal sums of pentagonal numbers).
For the quinary sums with $\aaa_4 \neq (1,2,4,5)$ among those $371$ quinary sums, 
one may easily prove that they are universal over $\nn$ from the set $E(P_{5,\aaa_4})$ given in Table \ref{table-data1} or \ref{table-data2}.
Now it suffice to show that the quinary sums $P_{5,\aaa}$ with
$$
\aaa = (1,2,4,5,a_5) \quad \text{with} \quad 5\le a_5 \le 13
$$
are universal over $\nn$. In the case when $a_5=12$, the sum $P_{5,(1,2,4,5,12)}$ is universal over $\nn$ since $E(P_{5,(1,2,4,12)})=\{8,138\}$ by Theorem \ref{thmquatpolyrep}.

Now assume that $5\le a_5 \le 13$ with $a_5\neq12$. 
Note that $N\pra P_{5,(1,2,4,5)}$ for any integer $N\ge 355873$ with $N \not \equiv 1,2,5,10 \Mod{12}$ by Theorem \ref{thmquatpolyrep}, as well as any non-negative integer $N\le 10^7$ except $13$ (see Remark \ref{rmk1} \ref{rmk1:1}). Note that $13 \pra P_{5,(1,2,4,5,a_5)}$.
If $N\ge 10^7$ with $N\equiv 1,2,5, \text{ or }10\Mod{12}$, then one may easily check that
$$
N-a_5 P_5(x_5) \not\equiv 1,2,5,10 \Mod{12}
$$
for some $0\le x_5\le 4$. Since $N-a_5 P_5(x_5)\ge 10^7-13\cdot22\ge 355873$ for such $x_5$, we have $N-a_5P_5(x_5)\pra P_{5,(1,2,4,5)}$, hence $N\pra P_{5,(1,2,4,5,a_5)}$. Thus $P_{5,(1,2,4,5,a_5)}$ is universal over $\nn$.
This completes the proof of the claim on quinary sums.

Assume that $P_{5,\aaa_5}$ is not universal over $\nn$. 
Then we have
$$
\aaa_5=(1,1,3,4,7) \quad  \text{or} \quad (1,2,2,5,10).	
$$
We first consider the case when $\aaa_5=(1,1,3,4,7)$. 
Since $E(P_{5,(1,1,3,4)})=\{11,18,81\}$, the quinary sum $P_{5,(1,1,3,4,7)}$ represents all positive integers over $\nn$ except for $18$.
Therefore, $P_{5,(1,1,3,4,7,a_6)}$ is universal over $\nn$ for any $7\leq a_6\leq 18$. Indeed, by Lemma \ref{lem-bound}, these $a_6$'s are the only possible integers for which $P_{5,(1,1,3,4,7,a_6)}$ are universal over $\nn$.
Similarly, if $\aaa_5=(1,2,2,5,10)$, then one may show that $P_{5,(1,2,2,5,10,a_6)}$ is universal over $\nn$ if and only if $10\leq a_6\leq 33$. 
Thus, there are $36$ senary proper universal sums of pentagonal numbers.
This completes the escalation.\\

Now, we are ready to prove Theorem \ref{63thm}

\begin{proof}[Proof of Theorem \ref{63thm}]
For $\aaa=(a_1,a_2,\dots,a_k)\in\n^k$, assume that the sum $P_{5,\aaa}$ of pentagonal numbers represents the integers
$$
\text{$1$, $2$, $3$, $4$, $6$, $7$, $8$, $9$, $11$, $13$, $14$, $17$, $18$, $19$, $23$, $28$, $31$, $33$, $34$, $39$, $42$, and $63$.}
$$
Repeating a similar argument used to classify proper universal sums of pentagonal numbers, one may easily show that $P_{5,\aaa_i}~(4\leq i\leq6)$ is one of the sums listed in Table \ref{table3}. 
Since the proper sum $P_{5,\aaa_i}$ of $P_{5,\aaa}$ is universal over $\nn$, the sum $P_{5,\aaa}$ of pentagonal numbers is universal over $\nn$. 
\end{proof}

\newpage

\begin{table}[!hp]
	\caption{Data for the proof of Theorem \ref{thmquatpolyrep} I} 
	\label{table-data1}
	\begin{center}
		\small
		\begin{tabular}{|c|c|c|c|c|c|}
			\hline
			
			\multirow{2}{*}{$\aaa$} & \multirow{2}{*}{Type} & {$N_\aaa$} & {$s_\aaa$} & {$B_\aaa$} & Used sufficient condition for\\ \cline{3-5}
			&& \multicolumn{3}{c|}{$E(P_{5,\aaa})$} & $[A_\aaa,b',a']\ra \gen(L_\aaa)$ $(\ast)$\\ 	\hline \hline
			
			\multirow{2}{*}{$(1,1,1,1)$}& \multirow{2}{*}{2} & $711$ & $2$ & $1$ &  \multirow{2}{*}{$a'\equiv b'\equiv 1\Mod{2}$}\\  \cline{3-5}
			&& \multicolumn{3}{c|}{$\{9,21,31,43,55,89\}$}& \\ \hline\hline
			
			\multirow{2}{*}{$(1,1,1,2)$}& \multirow{2}{*}{2} & $3769$ & $2$ & $2$ &  \multirow{2}{*}{$b'\not\equiv 0\Mod{5}$}\\ \cline{3-5}
			&& \multicolumn{3}{c|}{$\emptyset$}& \\	\hline\hline			           
			
			\multirow{2}{*}{$(1,1,1,3)$}& \multirow{2}{*}{2} & $10473$ & $2$ & $3$ &  \multirow{2}{*}{$a'\equiv 2\Mod{3}$}\\ \cline{3-5}
			&& \multicolumn{3}{c|}{$\{19\}$}& \\ \hline\hline
			
			\multirow{2}{*}{$(1,1,1,4)$}& \multirow{2}{*}{1} & $5453$ & $1$ & $4$ &  \multirow{2}{*}{$a' \equiv 2 \Mod{4}$}\\ \cline{3-5}
			&& \multicolumn{3}{c|}{$\{8\}$}& \\	\hline\hline
			
			\multirow{2}{*}{$(1,1,2,2)$}& \multirow{2}{*}{2} & $1120$ & $2$ & $1$ &  \multirow{2}{*}{$a'\equiv b'\equiv 1\Mod{2}$}\\ \cline{3-5}
			&& \multicolumn{3}{c|}{$\emptyset$}& \\  \hline\hline
			
			\multirow{2}{*}{$(1,1,2,3)$}& \multirow{2}{*}{2} & $5453$ & $2$ & $2$ &  \multirow{2}{*}{$ b'\not\equiv 0 \Mod{7}$}\\ \cline{3-5}
			&& \multicolumn{3}{c|}{$\emptyset$}& \\	\hline\hline
			
			\multirow{2}{*}{$(1,1,2,4)$}& \multirow{2}{*}{2} & $6294$ & $4$ & $1$ &  \multirow{2}{*}{$a'\equiv b'\equiv 1\Mod{2}$}\\ \cline{3-5}
			&& \multicolumn{3}{c|}{$\emptyset$}& \\	\hline\hline
			
			\multirow{2}{*}{$(1,1,2,5)$}& \multirow{2}{*}{3} & $2373728$ & $24$ & $3$ & \multirow{2}{*}{$9a'-b'^2 \not\equiv 0 \Mod{5}$}\\ \cline{3-5}
			&& \multicolumn{3}{c|}{$\emptyset$}& \\	\hline\hline
			
			\multirow{2}{*}{$(1,1,2,6)$}& \multirow{2}{*}{2} &  $73138$& $6$ & $2$ &  \multirow{2}{*}{ $b'\not\equiv 0\Mod{5}$}\\ \cline{3-5}
			&& \multicolumn{3}{c|}{$\emptyset$}& \\	\hline\hline
			
			\multirow{2}{*}{$(1,1,2,7)$}& \multirow{2}{*}{3} & $324897$  & $24$ & $1$ & \multirow{2}{*}{Any $a'$ and $b'$}\\ \cline{3-5}
			&& \multicolumn{3}{c|}{$\{28\}$}& \\	\hline\hline
			
			\multirow{2}{*}{$(1,1,2,8)$}& \multirow{2}{*}{3} & $88555$ & $12$ & $1$ & \multirow{2}{*}{$a'\equiv b' \equiv 1 \Mod{2} $}\\ \cline{3-5}
			&& \multicolumn{3}{c|}{$\{39\}$}& \\	\hline\hline
			
			\multirow{2}{*}{$(1,1,2,9)$}& \multirow{2}{*}{3} & $1550980$ & $24$ & $2$ & \multirow{2}{*}{$b'\not\equiv 0 \Mod{13}$}\\ \cline{3-5}
			&& \multicolumn{3}{c|}{$\{18\}$}& \\	\hline\hline
			
			\multirow{2}{*}{$(1,1,3,3)$}& \multirow{2}{*}{2} & $14295$ & $2$ & $3$ & {$8a'-b'^2 \equiv 1 \Mod{3}$}\\ \cline{3-5}
			&& \multicolumn{3}{c|}{$\{14\}$}& and $a'\equiv b' \equiv 1 \Mod{2}$ \\	\hline\hline
			
			\multirow{2}{*}{$(1,1,3,4)$}& \multirow{2}{*}{3} & $65428$ & $12$ & $1$ & \multirow{2}{*}{$a'\equiv 2 \Mod{3}$ if $b\equiv0\Mod{3}$}\\ \cline{3-5}
			&& \multicolumn{3}{c|}{$\{11,18,81\}$}& \\	
			\hline\hline
			
			\multirow{2}{*}{$(1,1,3,5)$}& \multirow{2}{*}{3} & $73138$ & $6$ & $2$ & \multirow{2}{*}{$b'\not\equiv 0 \Mod{5}$}\\ \cline{3-5}
			&& \multicolumn{3}{c|}{$\{19\}$}& \\	
			\hline\hline
			
			\multirow{2}{*}{$(1,1,3,6)$}& \multirow{2}{*}{3} & $324897$  & $8$ & $3$ & {$11a'-b'^2\not\equiv2\Mod{3}$}\\ \cline{3-5}
			&& \multicolumn{3}{c|}{$\emptyset$}& and $b'\not\equiv 0 \Mod{11}$\\	\hline\hline
			
			\multirow{2}{*}{$(1,1,3,7)$}& \multirow{2}{*}{3} & $801954$ & $12$ & $3$ & \multirow{2}{*}{$12a'-b'^2\not \equiv 0 \Mod{7}$}\\ \cline{3-5}
			&& \multicolumn{3}{c|}{$\{14,18\}$}& \\
			
			\hline
		\end{tabular}
	\end{center}
	{\raggedright \tiny $(\ast)$ $a'\in\n$ and $b'\in\z$ are such that $a'\equiv b'\Mod{2}$ and $A_\aaa a'-b'^2>0$.}
\end{table}

\newpage 

\begin{table}[!hp]
	\caption{Data for the proof of Theorem \ref{thmquatpolyrep} II} 
	\label{table-data2}
	\begin{center}
		\small
		\begin{tabular}{|c|c|c|c|c|c|}
			\hline
			
			\multirow{2}{*}{$\aaa$} & \multirow{2}{*}{Type} & {$N_\aaa$} & {$s_\aaa$} & {$B_\aaa$} & Used sufficient condition for \\ \cline{3-5}
			&& \multicolumn{3}{c|}{$E(P_{5,\aaa})$} & $[A_\aaa,b',a']\ra \gen(L_\aaa)$ $(\ast\ast)$\\ \hline \hline

			\multirow{2}{*}{$(1,2,2,2)$}& \multirow{2}{*}{1} & $16902$ & $1$ & $7$ & \multirow{2}{*}{$7a'-b'^2\neq 2^{2c}(16d+14)$}\\ \cline{3-5}				
			&& \multicolumn{3}{c|}{$\{8,43,67,135\}$}& \\	\hline\hline
			
			\multirow{2}{*}{$(1,2,2,3)$}& \multirow{2}{*}{2} & $1529$ & $2$ & $1$ & \multirow{2}{*}{$a'\equiv b'\equiv 1\Mod{2}$}\\ \cline{3-5}				
			&& \multicolumn{3}{c|}{$\{33\}$}& \\	\hline\hline
			
			\multirow{2}{*}{$(1,2,2,4)$}& \multirow{2}{*}{2} & $1054175$ & $12$ & $4$ & \multirow{2}{*}{$9a'-b'^2\neq 2^{2c}(8d+7)$}\\ \cline{3-5}				
			&& \multicolumn{3}{c|}{$\emptyset$}& \\	\hline\hline
			
			\multirow{2}{*}{$(1,2,2,5)$}& \multirow{2}{*}{3} & $32354$ & $8$ & $1$ & \multirow{2}{*}{Any $a'$ and $b'$}\\ \cline{3-5}				
			&& \multicolumn{3}{c|}{$\{23,33\}$}& \\	\hline\hline
			
			\multirow{2}{*}{$(1,2,2,6)$}& \multirow{2}{*}{2} & $1302606$ & $24$ & $2$ & \multirow{2}{*}{$b'\not\equiv 0\Mod{11}$}\\ \cline{3-5}				
			&& \multicolumn{3}{c|}{$\emptyset$}& \\	\hline\hline
			
			\multirow{2}{*}{$(1,2,3,3)$}& \multirow{2}{*}{4} & $262933$ & $24$ & $1$ & \multirow{2}{*}{$9a'-b'^2\neq 3^{2c}(3d+1)$}\\ \cline{3-5}				
			&& \multicolumn{3}{c|}{$\{34,99\}$}& \\	\hline\hline

			\multirow{2}{*}{$(1,2,3,4)$}& \multirow{2}{*}{3} &  $18114$ & $6$ & $1$ & \multirow{2}{*}{ $a'\equiv b'\equiv 1 \Mod{2}$}\\ \cline{3-5}
			&& \multicolumn{3}{c|}{$\emptyset$}& \\  \hline\hline
			
			\multirow{2}{*}{$(1,2,3,5)$}& \multirow{2}{*}{3} & $1302606$ & $24$ & $2$ &  \multirow{2}{*}{$b'\not\equiv 0\Mod{11}$}\\ \cline{3-5}				
			&& \multicolumn{3}{c|}{$\emptyset$}& \\	\hline\hline
			
			\multirow{2}{*}{$(1,2,3,6)$}& \multirow{2}{*}{3} & $1426798$ & $24$ & $2$ & \multirow{2}{*}{$12a'-b'^2 \neq 2^{2c}(8d+7)$}\\ \cline{3-5}				
			&& \multicolumn{3}{c|}{$\{63\}$}& \\	\hline\hline
			
			\multirow{2}{*}{$(1,2,3,7)$}& \multirow{2}{*}{3} & $23841$ & $6$ & $1$ & \multirow{2}{*}{Any $a'$ and $b'$}\\ \cline{3-5}				
			&& \multicolumn{3}{c|}{$\emptyset$}& \\	\hline\hline
			
			\multirow{2}{*}{$(1,2,3,8)$}& \multirow{2}{*}{3} & $103969$ & $12$ & $1$ & \multirow{2}{*}{$a'\equiv b'\equiv 1 \Mod{2}$}\\ \cline{3-5}				
			&& \multicolumn{3}{c|}{$\{31,63\}$}& \\	\hline\hline
			
			\multirow{2}{*}{$(1,2,3,9)$}& \multirow{2}{*}{3} & $7205634$ & $32$ & $3$ & \multirow{2}{*}{$a'\equiv 0\Mod{3}$ if $b'\equiv 0\Mod{3}$}\\ \cline{3-5}				
			&& \multicolumn{3}{c|}{$\{42\}$}& \\	\hline\hline
			
			\multirow{2}{*}{$(1,2,4,4)$}& \multirow{2}{*}{2} & $578265$ & $16$ & $2$ & {$11a'-b'^2\neq 16d+6$}\\ \cline{3-5}				
			&& \multicolumn{3}{c|}{$\{17\}$}&and $b'\not\equiv0\Mod{11}$ \\	\hline\hline
			
			\multirow{2}{*}{$(1,2,4,5)$}& \multirow{2}{*}{4} & $355873$ & $24$ & $1$ & \multirow{2}{*}{$12a'-b'^2\neq 2^{2c}(16d+6)$}\\ \cline{3-5}				
			& &\multicolumn{3}{c|}{$\{13\}$ $(\ast\ast\ast)$}& \\	\hline\hline
			
			\multirow{2}{*}{$(1,2,4,6)$}& \multirow{2}{*}{2} & $1550980$ & $12$ & $4$ & \multirow{2}{*}{$13a'-b'^2\neq 2^{2c}(8d+5)$}\\ \cline{3-5}				
			&& \multicolumn{3}{c|}{$\emptyset$}& \\	\hline\hline
			
			\multirow{2}{*}{$(1,2,4,7)$}& \multirow{2}{*}{4} & $12687051$ & $66$ & $2$ & \multirow{2}{*}{Any $a'$ and $b'$}\\ \cline{3-5}				
			&& \multicolumn{3}{c|}{$\emptyset$}& \\	\hline\hline
			
			\multirow{2}{*}{$(1,2,4,8)$}& \multirow{2}{*}{3} & $1799322$ & $16$ & $3$ & \multirow{2}{*}{$15a'-b'^2\neq 2^{2c}(8d+7)$}\\ \cline{3-5}				
			&& \multicolumn{3}{c|}{$\emptyset$}& \\ \hline\hline
			
			\multirow{2}{*}{$(1,2,4,12)$}& \multirow{2}{*}{4} & $2295950$ & $24$ & $2$ & \multirow{2}{*}{$19a'-b'^2 \neq 2^{2c}(16d+10)$}\\ \cline{3-5}				
			&& \multicolumn{3}{c|}{$\{8, 138\}$}& \\				
			\hline
		\end{tabular}
	\end{center}
	{\raggedright \tiny 
	$(\ast\ast)$ $a'\in\n$ and $b'\in\z$ are such that $a'\equiv b'\Mod{2}$ and $A_\aaa a'-b'^2>0$, and $c,d$ are non-negative integers.\\[-3pt]
	$(\ast\ast\ast)$ Determining the $E(P_{5,(1,2,4,5)})$ still remains open, while $\{N\in E(P_{5,\aaa} \mid N\le 10^7\}=\{13\}$ is verified.}
\end{table}
\newpage 
\begin{table}[!hp]
	\caption{Proper universal sums $P_{5,\aaa}$ of pentagonal numbers} 
	\label{table3}
	\begin{center}
		\small
		\begin{tabular}{|c|c|}
			\hline
			$\aaa$ & Conditions on $a_k$ $(4\leq k\leq 6)$\\
			\hline\hline 
			$(1,1,1,a_4)$ & $a_4=2$\\
			$(1,1,2,a_4)$ & $2\leq a_4\leq 6$\\
			$(1,1,3,a_4)$ & $a_4=6$\\
			$(1,2,2,a_4)$ & $a_4=4,6$\\
			$(1,2,3,a_4)$ & $a_4=4,5,7$\\
			$(1,2,4,a_4)$ & $a_4=6,7,8$\\
			\hline\hline
			$(1,1,1,1,a_5)$ & $1\leq a_5\leq 9$\\
			$(1,1,1,3,a_5)$ & $3\leq a_5\leq 19$\\
			$(1,1,1,4,a_5)$ & $4\leq a_5\leq 8$\\
			$(1,1,2,7,a_5)$ & $7\leq a_5\leq 28$\\
			$(1,1,2,8,a_5)$ & $8\leq a_5\leq 39$\\
			$(1,1,2,9,a_5)$ & $9\leq a_5\leq 18$\\
			$(1,1,3,3,a_5)$ & $3\leq a_5\leq 14$\\
			$(1,1,3,4,a_5)$ & $4\leq a_5\leq 18$, $a_5\neq 7$\\
			$(1,1,3,5,a_5)$ & $5\leq a_5\leq 19$\\
			$(1,1,3,7,a_5)$ & $7\leq a_5\leq 14$\\
			$(1,2,2,2,a_5)$ & $2\leq a_5\leq 8$\\
			$(1,2,2,3,a_5)$ & $3\leq a_5\leq 33$\\
			$(1,2,2,5,a_5)$ & $5\leq a_5\leq 23$, $a_5\neq10$\\
			$(1,2,3,3,a_5)$ & $3\leq a_5\leq 34$\\
			$(1,2,3,6,a_5)$ & $6\leq a_5\leq 63$\\
			$(1,2,3,8,a_5)$ & $8\leq a_5\leq 31$\\
			$(1,2,3,9,a_5)$ & $9\leq a_5\leq 42$\\
			$(1,2,4,4,a_5)$ & $4\leq a_5\leq 17$\\
			$(1,2,4,5,a_5)$ & $5\leq a_5\leq 13$\\
			\hline\hline
			$(1,1,3,4,7,a_6)$ & $7\leq a_6\leq 18$\\
			$(1,2,2,5,10,a_6)$ & $10\leq a_6\leq 33$\\
			
			\hline
		\end{tabular}
	\end{center}
\end{table}

\end{document}